\definecolor{bluUniud}{RGB}{119,154,171}
\definecolor{mycolor}{HTML}{F7F8E0}
\newtheorem{theorem}{Theorem}[section]
\newtheorem{mtheorem}{Theorem}
\newtheorem{lemma}[theorem]{Lemma}
\newtheorem{property}[theorem]{Property}
\newtheorem{proposition}[theorem]{Proposition}
\newtheorem{construction}[theorem]{Construction}
\theoremstyle{definition}
\newtheorem{definition}[theorem]{Definition}
\theoremstyle{remark}  
 \newtheoremstyle{fatto}
  {}
  {}
  {\itshape}
  {5mm}
  {\bfseries} 
  {.}
  {2mm}
  {}
\theoremstyle{fatto}
\declaretheoremstyle[
  spaceabove=-3pt,%
  spacebelow=6pt,%
  headfont=\normalfont\itshape,%
  postheadspace=2mm,%
  qed=\footnotesize{\(\square\)}]{mystyle}
\setlist{itemsep=.02em,labelsep=.4em,topsep=.5em}   
\newcommand{\rca}{\mathsf{RCA}_0}
\newcommand{\wkl}{\mathsf{WKL}_0}
\newcommand{\aca}{\mathsf{ACA}_0}
\newcommand{\E}{\,E\,}
\newcommand{\Q}{\,Q\,}
\newcommand{\Qcon}{\,\bar{Q}\,}
\newcommand{\minpath}{minimal path}
\newcommand{\imp}{\rightarrow}
\newcommand{\Imp}{\Rightarrow}
\newcommand{\nat}{\mathbb{N}}
\newcommand{\Z}{\mathsf{Z}}
\newcommand{\smf}{{}^\smallfrown}
\definecolor{azzurro}{HTML}{41C6C4}
\definecolor{viola}{HTML}{D628D3}
\definecolor{oliva}{HTML}{DC80BA}
\title[Uniquely orderable interval graphs]{Uniquely orderable interval graphs}
\author[M.~Fiori-Carones]{Marta Fiori-Carones}
\address{Instytut Matematyki, Uniwersytet  Warszawski,
Banacha 2, 02-097 Warszawa --- Poland}
\email{marta.fioricarones@outlook.it}
\author[A.~Marcone]{Alberto Marcone}
\address{Dipartimento di scienze matematiche, informatiche e fisiche, Universit\`a di Udine, Via delle Scienze 208, 33100 Udine --- Italy}
\email{alberto.marcone@uniud.it}
\thanks{Both authors were partially supported by the Italian PRIN 2017 Grant \lq\lq Mathematical
Logic: models, sets, computability\rq\rq.}
\date{\today}
\begin{document}

\begin{abstract}
Interval graphs and interval orders are deeply linked. In fact, edges of an
interval graphs represent the incomparability relation of an interval
order, and in general, of different interval orders. The question about the
conditions under which a given interval graph is associated to a unique
interval order (up to duality) arises naturally. Fishburn provided a
characterisation for uniquely orderable finite connected interval graphs.
We show, by an entirely new proof, that the same characterisation holds
also for infinite connected interval graphs. Using tools from reverse
mathematics, we explain why the characterisation cannot be lifted from the
finite to the infinite by compactness, as it often happens.
\end{abstract}

\keywords{Interval graphs, infinite graphs, unique orderability, reverse
mathematics}

\subjclass[2020]{Primary 05C63; Secondary 05C75, 03B30, 05C62}

\maketitle

\section{Introduction}

An \emph{interval graph} is a graph whose vertices can be mapped (by an
\emph{interval representation}) to nonempty intervals of a linear order in
such a way that two vertices are adjacent if and only if the intervals
associated to them intersect (it is thus convenient to assume that the
adjacency relation is reflexive). Consequently, if two vertices are
incomparable in the graph, the corresponding intervals are placed one before
the other in the linear order. The definition of interval graphs leads to an
analogous concept for partial orders. In fact, a partial order $<_P$ is an
\emph{interval order} if its points can be mapped to nonempty intervals of a
linear order in such a way that $x <_P y$ if and only if the interval
associated to $x$ completely precedes the interval associated to $y$. Thus
interval graphs are the incomparability graphs of interval orders, i.e.\ two
vertices are adjacent in the graph if and only if they are incomparable in
the partial order.\smallskip

Norbert Wiener was probably the first to pay attention to interval orders,
disguised under the less familiar name \lq relations of complete sequence\rq,
in \cite{wiener1914contribution}. Interval graphs and interval orders were
rediscovered and given the current name in \cite{fishburn1970}. There is now
an extensive literature on the topic: \cite{TrotterSurvey} provides a survey
for many result in this area, focusing primarily on finite
structures.\smallskip

Interval graphs and interval orders are extensively employed in diverse
fields like psychology, archaeology and physics, just to mention a few.
Wiener himself noticed that interval orders are useful for the analysis of
temporal events and in the representation of measures subject to a margin of
error. Interval orders actually occur in many digital calendars, where hours
and days form a linear order and a rectangle covers the time assigned to an
appointment: if two rectangles intersect, we better choose which event we
will miss. Intervals are also suitable for representations of measurements of
physical properties which are subject to error, since they can take into
account the accuracy of the measuring device much better than a
representation with points. In psychology and economics the overlap between
two intervals often indicates that the corresponding stimuli or preferences
are indistinguishable.\smallskip

In the first paragraph we described how to build an interval order from an
interval representation of an interval graph. In general, an interval graph
leads to many different interval orders on its vertices: an extreme example
is a totally disconnected graph which is associated to any total order on its
vertices. This paper deals with the situation were the interval graph is
\emph{uniquely orderable}, i.e.\ there is essentially only one interval order
associated to the given interval graph. (The \lq\lq essentially\rq\rq\ in the
previous sentence is due to the obvious observation that if an interval order
is associated to a graph, then the same is true for the reverse partial
order.) Here the extreme example is a complete graph, which is associated to
a unique partial order, the antichain of its vertices.\smallskip

The question of which interval graphs are uniquely orderable is easily
settled for non connected graphs. It is in fact immediate that a non
connected interval graph is uniquely orderable if and only if it has at most
two components each of which is complete.

We can thus restrict our attention to connected interval graphs. In this
context, Fishburn \cite[\S3.6]{Fishburn}, building on results proved in
\cite{Hanlon82}, provides two characterizations of unique orderability for
finite graphs. Indeed, some steps of the proof heavily rely on the finiteness
of the graph. This is in contrast with the rest of Fishburn's monograph,
where results are systematically proved for arbitrary interval graphs and
orders; we thus believe that Fishburn did not know whether his result held
for infinite interval graphs as well. The main result of this paper solves
this issue by extending Fishburn's characterizations to arbitrary interval
graphs by an entirely different proof (for undefined notions see
\S\ref{Sec:prelim} below):

\begin{mtheorem}\label{MainTheorem}
Let $(V,E)$ be a (possibly infinite) connected interval graph. Let
$W=\{(a,b)\in V \times V \mid \neg a \E b\}$ and $(a,b) \Q (c,d) \iff a \E c
\land b \E d$. The following are equivalent:
\begin{enumerate}
\item $(V,E)$ is uniquely orderable;
\item $(V,E)$ does not contain a buried subgraph;
\item the graph $(W,Q)$ has two components.
\end{enumerate}
\end{mtheorem}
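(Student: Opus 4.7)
My plan is to establish the cycle (3) $\Rightarrow$ (1) $\Rightarrow$ (2) $\Rightarrow$ (3), where the first implication is the most transparent and the bulk of the work sits in the last one.

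The observation driving (3) $\Rightarrow$ (1) is that any interval order $<_P$ compatible with $(V,E)$ induces a map $\sigma_P\colon W \to \{+,-\}$ by setting $\sigma_P(a,b) = +$ exactly when $a <_P b$. I would first verify that $\sigma_P$ is constant on each $Q$-component: if $(a,b) \Q (c,d)$, so $a \E c$ and $b \E d$, and $a <_P b$, then in any interval representation realizing $<_P$ the interval for $c$ meets that of $a$ (which lies entirely to the left of $b$'s), and $d$'s meets $b$'s, so $c$ cannot lie entirely to the right of $d$; since $\neg c \E d$, the two intervals are disjoint and thus $c <_P d$. Since the involution $\iota(a,b) = (b,a)$ on $W$ reverses $\sigma_P$, it must send each $Q$-component to a different one, so the components come in $\iota$-pairs. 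Hence when $(W,Q)$ has exactly two components, $\sigma_P$ is determined up to a global sign, which forces $<_P$ to be determined up to duality.

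For (1) $\Rightarrow$ (2) I would reason contrapositively: given a buried subgraph $H$ of $(V,E)$, locally flip the orientation of the pairs witnessing the burial to produce a second interval order which is not the dual of the original. The correctness of this flip should follow from whatever finite configuration the term \emph{buried subgraph} isolates in \S\ref{Sec:prelim}, which should be designed precisely as the structural situation where such an alternative orientation is compatible with $E$.

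The deepest step is (2) $\Rightarrow$ (3), which I would again prove in contrapositive form: assuming $(W,Q)$ has strictly more than two components, extract a buried subgraph from $(V,E)$. Here I would fix an interval representation of $(V,E)$, pick two $Q$-components that $\iota$ does not pair up, take representative pairs $(a,b)$ and $(c,d)$ in them, and analyse where a hypothetical $Q$-path from $(a,b)$ to $(c,d)$ must break down; the first failure of the condition $a \E c \land b \E d$ along a carefully chosen chain should expose a finite configuration matching the definition of buried subgraph. The main obstacle I anticipate is exactly this extraction: finite witnesses must be produced inside an a priori infinite graph, and Fishburn's finite argument is presumably non-constructive in a way that does not survive to the infinite setting. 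The reverse-mathematical analysis later in the paper signals that for some natural reformulations compactness (in the guise of $\wkl$) is actually required, so I would be careful to extract the buried subgraph directly from explicit $Q$-paths and the chosen representation, rather than invoking any compactness argument.
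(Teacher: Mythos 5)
Your overall decomposition (3)$\Rightarrow$(1)$\Rightarrow$(2)$\Rightarrow$(3) coincides with the paper's, and your (3)$\Rightarrow$(1) is essentially \cref{PropertyQ} plus \cref{QuniqordRCA} (though you tacitly use that \emph{every} partial order associated to $(V,E)$ admits an interval representation; the paper avoids this by arguing combinatorially, ruling out a chordless $4$-cycle via \cref{char_ig}). But (1)$\Rightarrow$(2) is not actually proved in your proposal: you defer the transitivity of the ``flipped'' order to ``whatever finite configuration the term buried subgraph isolates''. This is precisely where Fishburn's original argument is inaccurate, as the paper points out in \cref{tuniqord}: reversing an associated order $\prec_0$ inside a buried subgraph $B$ need not be transitive unless $B$ is $\prec_0$-convex, and in general it is not. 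The paper's proof first replaces $\prec_0$ by an associated order $\prec$ in which all of $B$ sits exactly where a fixed $b_0\in B$ sat relative to vertices outside $B$ (so $B$ becomes $\prec$-convex), and only then flips inside $B$; condition (ii) of \cref{defbburied} ($R(B)\neq\emptyset$) is what guarantees the result is not the dual order. Without this preliminary convexification step your flip can fail to be a partial order.

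The more serious gap is (2)$\Rightarrow$(3). Your plan is to follow a hypothetical $Q$-path between unpaired components and let ``the first failure \dots expose a finite configuration matching the definition of buried subgraph'', extracted ``directly from explicit $Q$-paths and the chosen representation''. This cannot work as described: a buried subgraph is a vertex set $B$ constrained globally through $K(B)$ and $R(B)$, and in infinite graphs the relevant $B$ is typically infinite; indeed the last section of the paper shows that producing a buried subgraph from the failure of unique orderability (or of $Q$-connectedness) is equivalent to $\aca$ over $\rca$, so no extraction from finite data --- and not even compactness in the guise of $\wkl$, which you suggest might be ``required'' --- can succeed (the paper's point is the opposite: $\wkl$ does \emph{not} suffice). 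The paper's actual argument runs in the other direction: for each non-adjacent pair $v,u$ it forms the closure $B(v,u)$ of \cref{ConstrB}, shows it is buried iff $R(B(v,u))\neq\emptyset$ (\cref{SemiBuried}), so the absence of buried subgraphs forces $B(v,u)=V$; then an induction on the entry levels $e_w$, supported by the structural facts about minimal paths in a representation (\cref{PropMinpath}, \cref{Livelli}, \cref{CorollarioLivelli}, \cref{AltroCammino}), yields $vu \Qcon xy$ (\cref{lemmaQcammini}) and a short case analysis gives exactly two components (\cref{SufCondition}). None of this machinery, which is the mathematical core of the theorem, appears in your sketch, so the key implication remains unproven.
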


Fishburn's statement is slightly different from ours, since it is formulated
for connected interval graphs without universal vertices. Since universal
vertices (i.e.\ those adjacent to all vertices of the graph) are incomparable
to all other vertices in any partial order associated to an interval graph,
removing all universal vertices does not change the unique orderability of
the graph. We prefer our formulation of the result since it highlights the
connectedness of the graph, which is the central property characterising the
class of interval graphs for which \cref{MainTheorem} holds.


A typical method to lift a result from finite structures to arbitrary ones is
compactness. Hence, once \cref{MainTheorem} is proved for finite interval
graphs, the first attempt to generalise it to the infinite is to argue by
compactness. This is not obvious and, using tools from mathematical logic, we
are able to show that it is in fact impossible. To this end we work in the
framework of reverse mathematics, a research program whose goal is to
establish the minimal axioms needed to prove a theorem. In this framework
compactness is embodied by the formal system $\wkl$. We first indicate, with
results which parallel those obtained in \cite{marcone2007} about interval
orders, that all the basic aspects of the theory of interval orders can be
developed in $\wkl$. On the other hand we prove the following:

\begin{mtheorem}\label{MainTheoremRM}
Over the base system $\rca$, the following are
equivalent:
\begin{enumerate}
\item $\aca$,
\item a countable connected interval graph $(V,E)$ is uniquely orderable if
    and only if does not contain a buried subgraph.
\end{enumerate}
\end{mtheorem}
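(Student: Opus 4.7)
The two directions of \cref{MainTheoremRM} call for different techniques.

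For $(1)\Rightarrow(2)$, I would work in $\rca+\aca$ and formalise the proof of \cref{MainTheorem} for countable interval graphs. The nontrivial step is ``no buried subgraph $\Rightarrow$ uniquely orderable'', which in \cref{MainTheorem} proceeds via the equivalent condition $(3)$ on the connected components of the auxiliary graph $(W,Q)$. Computing connected components of a countable graph is well-known to be equivalent over $\rca$ to $\aca$, so under $\aca$ we can form these components as sets and use them to construct the essentially unique interval order witnessing unique orderability. The converse implication, that a buried subgraph obstructs unique orderability, is local: one produces two non-isomorphic interval orders compatible with the same graph by swapping the relative positions of the critical vertices of the buried subgraph, a construction that should be feasible in $\rca$ alone (and in any case no stronger than $\wkl$, which the paper indicates suffices for the basic theory).

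For $(2)\Rightarrow(1)$, I would derive $\aca$ by reducing the biconditional in $(2)$ to the statement that every injective function $f\colon\Nb\to\Nb$ has a range, a standard equivalent of $\aca$ over $\rca$. Given such an $f$, I would construct in $\rca$ a countable connected interval graph $G_f$ by attaching a local gadget $H_n$ to each natural number~$n$, with the property that $G_f$ provably has no buried subgraph (this being a local, $\rca$-verifiable condition) while the unique orderability of $G_f$ in a given model of $\rca$ is equivalent to that model recognising $\ran(f)$ as a set: roughly, each $H_n$ should introduce a binary ambiguity in the interval ordering that is resolved exactly when $n$ has been observed in $\ran(f)$, and synchronising all such local resolutions into a global interval order requires $\ran(f)$ to exist. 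The biconditional $(2)$ would then force $G_f$ to be uniquely orderable, from which $\ran(f)$ can be extracted by inspecting the resulting order.

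The main obstacle will be the gadget design in direction $(2)\Rightarrow(1)$. Each $H_n$ must introduce exactly the right amount of ordering freedom, so that resolving all such freedoms simultaneously is tantamount to computing $\ran(f)$, yet the collection of gadgets must not conspire to form a buried subgraph globally, and connectedness together with the interval graph property must be preserved throughout. Designing gadgets with these properties requires a fine-grained understanding of buried subgraphs and of how local ordering ambiguities assemble, via the structure of $(W,Q)$, into the components controlling global orderability---both of which will be developed in the positive proof of \cref{MainTheorem}. I expect the construction to mirror standard gadget techniques from reverse mathematics while being adapted to the specific combinatorics of interval graphs.
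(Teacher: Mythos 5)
The reversal $(2)\Rightarrow(1)$ is where your proposal breaks down, and the failure is structural, not a matter of missing gadget details. You ask for a graph $G_f$, built in $\rca$ from an injection $f$, which \emph{provably} contains no buried subgraph and whose unique orderability in a given model is equivalent to that model containing $\ran(f)$. These demands are jointly unsatisfiable. Since ``no buried subgraph'' would be provable in $\rca$, it would be true, so by \cref{MainTheorem} $G_f$ would be genuinely uniquely orderable; and unique orderability is downward absolute to $\omega$-models containing $G_f$ and the canonical order $\prec_1$ read off its representation (any associated order lying in the model is genuinely associated, hence genuinely equal to $\prec_1$ or its dual). Taking $f$ computable with noncomputable range and the $\omega$-model of the computable sets, $G_f$ would be uniquely orderable there while $\ran(f)$ is absent, so $\rca$ cannot prove ``$G_f$ uniquely orderable $\Rightarrow$ $\ran(f)$ exists''. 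Put differently, your gadgets as described (an ambiguity in $H_n$ that persists unless $n$ enters $\ran(f)$) would make $G_f$ non-uniquely orderable whenever $f$ omits a value, with both witnessing orders constructible from $f$; then $G_f$ would be truly non-uniquely orderable yet truly buried-subgraph-free, contradicting \cref{MainTheorem} itself. The underlying point is that ``uniquely orderable'' is a universally quantified conclusion and hands you no new set to ``inspect''; the only half of the biconditional in (2) whose conclusion can code a set is ``not uniquely orderable $\Rightarrow$ there exists a buried subgraph''.

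That is exactly what the paper exploits: it builds in $\rca$ a connected closed interval graph on $\{a,b,k,r\}\cup\{x_i,y_i \mid i\in\Nb\}$ with $y_s \E x_i$ iff $i$ is true for $f$ at stage $s+1$, exhibits two explicit associated orders (so non-unique orderability is $\rca$-provable), and then shows that any buried subgraph $B$ must equal $\{a,b\}\cup\{y_n \mid n\in\Nb\}\cup\{x_n \mid n \text{ false}\}$; hence $\{n \mid x_n\notin B\}$ is an infinite set of true stages for $f$, which gives $\aca$ by \cref{true}. In your forward direction the sketch is salvageable but the obstacle you identify is misplaced: one does not need to form the components of $(W,Q)$ as a set, one needs to run the proof of \cref{SufCondition}, and the genuine issue is that \cref{ConstrB} iterates an arithmetical definition $\omega$ times, which naively requires $\aca^+$; the paper circumvents this with a $\Sigma^0_1$ finite-labelled-tree characterization of $B(v,u)$ (\cref{ACAproves}), after invoking $\wkl$ (available under $\aca$) to pass from an interval graph to a closed representation.
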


Since $\aca$ is properly stronger than $\wkl$ this shows that compactness
does not suffice to prove \cref{MainTheorem}.\bigskip
%
%

\cref{Sec:prelim} establishes notation and terminology, while
\cref{Sec:UniquelyOrderable} is devoted to the proof of \cref{MainTheorem}.
\cref{Sec:ReverseMath} gives an overview of the reverse mathematics of
interval graphs: the first author's PhD thesis \cite{MFCthesis} includes full
proofs. The last section is devoted to the proof of \cref{MainTheoremRM}.

\section{Preliminaries}\label{Sec:prelim}

In this section we establish the terminology used in the paper and underline
some properties of interval graphs that turn out to be useful in the next
section.

All the graphs $(V,E)$ in this paper are such that $E \subseteq V \times V$
is a symmetric relation (we do not ask $E$ to be irreflexive, as in some
cases it is convenient to have reflexivity). As usual, we write $v \E u$ to
mean $(v,u) \in E$ and, if $V' \subseteq V$, we write $(V',E)$ in place of
$(V', E \cap (V' \times V'))$. We denote by $(V,\overline{E})$ the
\emph{complementary graph} of $(V,E)$: for $u,v \in V$ we have $u
\,\overline{E}\, v$ if and only if $u \E v$ does not hold.

Paths and cycles are defined as usual, and their length is the number of
their edges.
A \emph{simple cycle} $v_0 \E \dots \E v_n$ is a cycle such that the vertices
in $v_0, \dots, v_{n-1}$ are distinct.  A \emph{chord} of a cycle $v_0 \E v_1
\E \dots \E v_n$ is an edge $(v_i,v_j)$ with $2 \le j-i \le n-2$. The chord
is \emph{triangular} if either $j-i=2$ or $j-i=n-2$.

\begin{definition}\label{DefComparability}
If $(V,\prec)$ is a strict partial order, the \emph{comparability graph of
$(V,\prec)$} is the graph $(V,E)$ such that for $v,u\in V$ it holds that $v
\E u$ if and only if either $v\prec u$ or $u\prec v$. The
\emph{incomparability graph of $(V,\prec)$} is the complementary graph of the
comparability graph, so that two  vertices are adjacent if and only
if they coincide or are $\prec$-incomparable.
%
%
\end{definition}

While the comparability graph of a strict partial order is irreflexive, its
incomparability graph is reflexive.

Notice that a graph $(V,E)$ can be the incomparability graph of more than one
partial order: we say that each such partial order is \emph{associated to
$(V,E)$}. In particular, $\prec$ and the dual of $\prec$ (i.e.\ $\prec'$ such
that $u \prec' v$ iff $v \prec u$) are associated to the same incomparability
graph.

\begin{definition}
A  graph $(V,E)$ is \emph{uniquely orderable} if it is the incomparability
graph of a partial order $\prec$ and the only other partial order associated
to $(V,E)$ is the dual order of $\prec$; in other words, there exists a
unique (up to duality) partial order $\prec$ such that for each $v,u \in V$
it holds that $\neg u \E v$ if and only if $u \prec v$ or $v \prec u$.
\end{definition}

The following definition formalises the intuitive idea of interval graph
given in the previous pages.


\begin{definition} \label{defClosedIntgraph}
A graph $(V,E)$ is an \emph{interval graph} if it is reflexive and there
exist a linear order $(L,<_L)$ and a map $F \colon V \to \wp(L)$ such that
for all $v,u \in V$, $F(v)$ is an interval in $(L,<_L)$ (i.e.\ if $\ell <_L
\ell' <_L \ell''$ and $\ell,\ell'' \in F(v)$, then also $\ell' \in F(v)$) and
\[
v \E u \Leftrightarrow F(v) \cap F(u) \neq \emptyset.
\]

It is well-known that we may in fact assume that there exist functions
$f_L,f_R\colon V\to L$ such that $F(v) = \{\ell \in L \mid f_L(v) \le_L \ell
\le_L f_R(v)\}$ for all $v \in V$ (this is the definition given in
\cite{Fishburn}).

We say that $(L,<_L,f_L,f_R)$ (but often only $(f_L,f_R)$ or just $F$) is a
\emph{representation} of $(V,E)$.
\end{definition}

To decide whether two vertices $u$ and $v$ are adjacent in an interval graph
with representation $(f_L,f_R)$ we can assume without loss of generality that
$f_L(v) \le_L f_L(u)$ and then simply check whether $f_L(u) \le_L f_R(v)$.

In the context of a representation $(f_L,f_R)$ of an interval graph, we write
$F(v) <_L F(u)$ in place of $f_R(v) <_L f_L(u)$. Then $\neg v \E u$ means
that either $F(v) <_L F(u)$ or $F(u) <_L F(v)$.


Figure \ref{FigIntGraph} provides an example of interval graph, while the
graph in Figure \ref{FigNonIntGraph} does not have an interval
representation (in the figures self loops are not shown for clarity).	
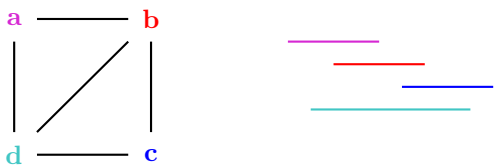
\begin{figure}
\begin{centering}
\begin{tikzpicture}[scale=.3]
\node[viola] at (-8,4){\textbf{a}};
\node[red] at (-2,4){\textbf{b}};
\node[blue] at (-2,-2){\textbf{c}};
\node[azzurro] at (-8,-2){\textbf{d}};
\draw[thick] (-7,4) -- (-3,4);
\draw[thick] (-7,-2) -- (-3,-2);
\draw[thick] (-2,3) --(-2,-1);
\draw[thick] (-8,3) -- (-8,-1);
\draw[thick] (-7,-1) -- (-3,3);

\draw[viola,thick] (4,3) -- (8,3);
\draw[red,thick]  (6,2) -- (10,2);
\draw[blue,thick] (9,1) -- (13,1);
\draw[azzurro,thick]  (5,0) -- (12,0);
\end{tikzpicture}
\caption{\small An example of interval graph with its representation}
\label{FigIntGraph}
\end{centering}
\end{figure}

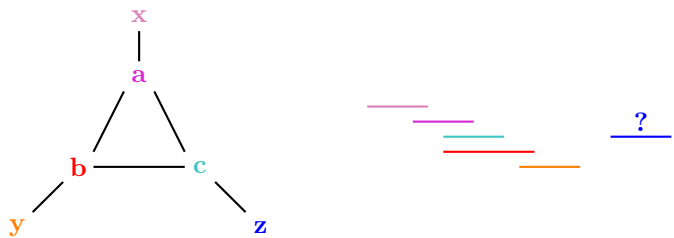
\begin{figure}
\begin{centering}
\begin{tikzpicture}[scale=.2]
\node[thick,viola] at (-6,6){\textbf{a}};
\node[thick,azzurro] at (-2,0){\textbf{c}};
\node[red] at (-10,0){\textbf{b}};
\node[oliva] at (-6,10){\textbf{x}};
\node[orange] at (-14,-4){\textbf{y}};
\node[blue] at (2,-4){\textbf{z}};
\node[blue] at (27,3){\textbf{?}};
\draw[thick] (-9,0) -- (-3,0);  
\draw[thick] (-9,1) -- (-7,5);  
\draw[thick] (-3,1) --(-5,5);  
\draw[thick] (-6,7) -- (-6,9); 
\draw[thick] (-13,-3) -- (-11,-1); 
\draw[thick] (1,-3) -- (-1,-1); 

\draw[oliva,thick]  (9,4) -- (13,4);
\draw[viola,thick] (12,3) -- (16,3);
\draw[azzurro,thick]  (14,2) -- (18,2);
\draw[red,thick] (14,1) -- (20,1);
\draw[orange,thick] (19,0)--(23,0);
\draw[blue,thick] (25,2)--(29,2);
\end{tikzpicture}
\caption{\small A graph which is not an interval graph, with a partial representation}
\label{FigNonIntGraph}
\end{centering}
\end{figure}

A classical characterization of interval graphs is the following
(\cite{lekkeikerker}, see \cite[Theorem 3.6]{Fishburn}).

\begin{definition}\label{def:triangulated}
A graph $(V,E)$ is \emph{triangulated} if every simple cycle of length four
or more has a chord. An \emph{asteroidal triple} in $(V,E)$ is an independent
set of three vertices (i.e.\ a set of pairwise non adjacent vertices) of $V$
such that any two of them are connected by a path that avoids the vertices
adjacent to the third.
\end{definition}

\begin{theorem}\label{char_ig}
A reflexive graph $(V,E)$ is an interval graph if and only it is triangulated and has
no asteroidal triples.
\end{theorem}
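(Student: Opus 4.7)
The plan is to prove the two directions separately. For the forward direction, fix a representation $(L,<_L,f_L,f_R)$ of $(V,E)$. Given any simple cycle $v_0 \E v_1 \E \cdots \E v_n$ of length $n \geq 4$, I would pick a vertex $v_i$ of the cycle for which $f_L(v_i)$ is maximum among $f_L(v_0),\dots,f_L(v_{n-1})$; its cycle-neighbours $v_{i-1}$ and $v_{i+1}$ (indices modulo $n$) both meet $F(v_i)$ and have left endpoint $\le_L f_L(v_i)$, so they both contain the point $f_L(v_i)$, yielding an edge between them that serves as a chord. For asteroidal triples, note that for pairwise non-adjacent $a,b,c$ the relation ``$F(\cdot) <_L F(\cdot)$'' restricted to $\{a,b,c\}$ is a linear order; assume $F(a) <_L F(b) <_L F(c)$. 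Along any $a$--$c$ path $a = u_0 \E u_1 \E \cdots \E u_k = c$, consecutive intervals intersect, so the position of $F(u_i)$ relative to $F(b)$ cannot jump from strictly left to strictly right in one step; some $u_i$ must intersect $F(b)$ and is therefore adjacent to $b$, so $b$ blocks every $a$--$c$ path.

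For the converse, the plan is to follow the classical Lekkerkerker--Boland strategy through the structure of maximal cliques. In a triangulated graph the maximal cliques enjoy the Helly property and form a tree-like intersection structure (a \emph{clique tree}), with the property that the cliques containing any fixed vertex induce a connected subtree. The core claim is that, under the additional no-asteroidal-triple hypothesis, the clique tree can be chosen to be a path: the maximal cliques can be linearly ordered $(C_\alpha)_{\alpha \in L}$ so that, for every vertex $v$, the indices $\alpha$ with $v \in C_\alpha$ form a contiguous interval in $L$. Once this is in place, mapping each $v$ to this interval of indices gives the required interval representation, since $u \E v$ holds exactly when $u$ and $v$ share a maximal clique.

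To produce the linear arrangement, I would argue contrapositively. A genuine branching of the clique skeleton at some central region would provide three ``arm'' cliques, and selecting one vertex from each arm outside the central region yields three pairwise non-adjacent vertices such that any two of them are joined by a path staying within their two arms and therefore avoiding the neighbourhood of the third---an asteroidal triple, contradicting the hypothesis. The main obstacle is carrying this out in the infinite case, where the finite proofs' induction on $|V|$ is unavailable. I expect to overcome this via a Zorn's lemma argument on partial linear orderings of maximal cliques that already satisfy the interval condition on their domain, showing that any such partial ordering admits an extension as long as a clique is missing: any obstruction to extending it can be traced back to either a chordless long cycle (ruled out by triangulatedness) or an asteroidal triple (ruled out by hypothesis). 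It is precisely this reliance on a non-constructive extension principle, rather than compactness, that anticipates the reverse-mathematical strength issues analysed later in the paper.
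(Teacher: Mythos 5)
The paper does not actually prove this theorem: it is quoted as the classical Lekkerkerker--Boland characterization, with references to \cite{lekkeikerker} and \cite[Theorem 3.6]{Fishburn}, so there is no internal proof to compare against. Your forward direction is correct and complete: the chord argument (the cycle vertex whose left endpoint is rightmost forces its two cycle-neighbours to contain that endpoint, hence to be adjacent) and the separation argument for asteroidal triples are both sound; the latter is essentially \cref{SubPath}.

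The converse, however, has a genuine gap exactly where you flag the difficulty. The entire content of the hard direction is the claim that the maximal cliques admit a consecutive arrangement, and the proposed Zorn's lemma argument does not deliver it. Zorn gives a maximal partial linear ordering of maximal cliques satisfying the interval condition on its domain, but to conclude you must show that such a maximal partial ordering is total, i.e.\ that every non-total one can be extended by one more clique; this one-step extension lemma is the theorem in disguise, and it is not established by asserting that ``any obstruction can be traced back to a chordless cycle or an asteroidal triple''. A partial arrangement can be locally consistent yet globally unextendable, which is why the finite proofs go through PQ-trees, clique trees with simplicial vertices, or Fishburn's machinery rather than greedy insertion. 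In the infinite setting there are further unaddressed issues: maximal cliques may be infinite and fail the Helly property in any representation (so they need not correspond to points of the line), and the clique-tree structure you invoke for triangulated graphs is a finite-graph fact. The standard and much shorter route to the infinite case is compactness: chordality and AT-freeness are hereditary, so every finite induced subgraph is an interval graph by the finite theorem, and first-order compactness then produces a representation of the whole graph. Your closing remark also mischaracterizes the reverse-mathematical situation: \cref{wkltri} shows that this direction of the characterization is equivalent to $\wkl$, i.e.\ it is exactly a compactness argument; it is the uniqueness criterion of \cref{MainTheorem}, not the present theorem, that outstrips compactness.
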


\begin{proposition}\label{SubPath}
Let $v_0 \E \dots \E v_n$ be a path in the
interval graph $(V,E)$ with representation $F$, and suppose $w \in V$ is such
that $F(w) \nless_L F(v_0)$ and $F(v_n) \nless_L F(w)$. Then $v_i \E w$ for
some $i \le n$, and hence $v_0 \E \dots \E v_i \E w$ and $w \E v_i \E \dots
\E v_n$ are paths.
\end{proposition}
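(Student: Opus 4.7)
The plan is to argue by contradiction: assume $F(w)$ meets none of the intervals $F(v_0),\ldots,F(v_n)$, and derive that some consecutive pair $F(v_i),F(v_{i+1})$ must be disjoint, contradicting $v_i \E v_{i+1}$.

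\textbf{Step 1 (setup).} Suppose, towards a contradiction, that $v_i \not\E w$ for every $i\le n$. Then for each $i$ the intervals $F(v_i)$ and $F(w)$ are disjoint, so either $F(v_i)<_L F(w)$ or $F(w)<_L F(v_i)$. Partition the indices accordingly into $A=\{i\mid F(v_i)<_L F(w)\}$ and $B=\{i\mid F(w)<_L F(v_i)\}$, with $A\cup B=\{0,\dots,n\}$ and $A\cap B=\emptyset$.

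\textbf{Step 2 (endpoints).} Now translate the two hypotheses: $F(w)\nless_L F(v_0)$ means exactly that $0\notin B$, hence $0\in A$; and $F(v_n)\nless_L F(w)$ means $n\notin A$, hence $n\in B$. In particular $A$ and $B$ are both nonempty.

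\textbf{Step 3 (finding the jump).} Let $i$ be the largest element of $A$; by Step 2 we have $0\le i<n$, so $i+1\le n$ and $i+1\in B$. Unpacking the definitions, $f_R(v_i)<_L f_L(w)$ and $f_R(w)<_L f_L(v_{i+1})$, so
\[
f_R(v_i)<_L f_L(w)\le_L f_R(w)<_L f_L(v_{i+1}),
\]
which gives $F(v_i)<_L F(v_{i+1})$. But then $F(v_i)\cap F(v_{i+1})=\emptyset$, contradicting $v_i\E v_{i+1}$.

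\textbf{Step 4 (conclusion).} Hence some $v_i \E w$, and the two displayed paths are obtained simply by concatenating the edge $v_i\E w$ with the initial and terminal sub-paths of $v_0\E\dots\E v_n$. I don't foresee any real obstacle: the only subtle point is correctly reading the hypotheses $F(w)\nless_L F(v_0)$ and $F(v_n)\nless_L F(w)$ as the statements ``$0\notin B$'' and ``$n\notin A$'', which is what lets the extremal-index argument get off the ground.
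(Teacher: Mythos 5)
Your proof is correct and is essentially the paper's argument: both pick the extremal index where the path ``jumps over'' $F(w)$ and use the adjacency $v_i \E v_{i+1}$ to force an intersection, the only difference being that you phrase it as a contradiction (all $F(v_i)$ disjoint from $F(w)$) while the paper argues directly with the maximal $i$ such that $F(w) \nless_L F(v_i)$.
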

\begin{proof}
Let $i \le n$ be maximum such that $F(w) \nless_L F(v_i)$. If $i=n$, then
$F(w) \nless_L F(v_n)$ and $F(v_n) \nless_L F(w)$ imply $v_n \E w$. If $i<n$,
then $F(w) <_L F(v_{i+1})$ and $F(v_i) \nless_L F(v_{i+1})$ (because $v_i \E
v_{i+1}$) imply $F(v_i) \nless_L F(w)$. This, together with $F(w) \nless_L
F(v_i)$, yields $v_i \E w$.
\end{proof}

\begin{definition}\label{DefMinPath}
Let $(V,E)$ be a graph. A path $v_0 \E \dots \E v_n$ is a \emph{\minpath} if
$\neg v_i \E v_j$ for every $i,j$ such that $i+1 < j \leq n$.
\end{definition}

Notice that if $v_0 \E \dots \E v_n$ is a path of minimal length among the
paths connecting $v_0$ and $v_n$, then it is a \minpath, but the reverse
implication does not hold. 
%

\begin{property} \label{Refine}
Let $(V,E)$ be a graph. Then each path can be refined to a \minpath.	
\end{property}
\begin{proof}
The statement follows immediately from the following observation: if $v_0 \E
\dots \E v_n$ is a path and $v_i \E v_j$ with $i+1 < j \leq n$, then $v_0 \E
\dots \E v_i \E v_j \E \dots \E v_n$ is still a path. 	
\end{proof}

\begin{property}\label{PropMinpath}
Let $(V,E)$ be an interval graph with representation $(L,<_L,f_L,f_R)$ and
suppose that $v_0 \E \dots \E v_n$ is a \minpath\ with $F(v_0) <_L F(v_n)$.
\begin{enumerate}[label=(\roman*)]
 \item Then $f_R(v_i) <_L f_R(v_{i+1})$ for each $i < n-1$ and $f_L(v_j)
     <_L f_L(v_{j+1})$ for each $j > 0$;
 \item if $F(v) <_L F(v_0)$, then $\neg v_i \E v$ for every $i \neq 1$;
     symmetrically, if $F(v_n) <_L F(v)$, then $\neg v_i \E v$ for every $i
     \neq n-1$.
\end{enumerate}
\end{property}
\begin{proof}
To check the first conjunct of (i), suppose $i < n-1$ is least such that
$f_R(v_{i+1}) \le_L f_R(v_i)$. Since $i < n-1$ it holds that $\neg v_j \E
v_n$ for each $j \le i$ by definition of \minpath. An easy induction,
starting with our assumption $F(v_0) <_L F(v_n)$, shows that $F(v_k) <_L
F(v_n)$ for each $k \le i$. Thus, in particular it holds that $f_R(v_i) <_L
f_R(v_n)$. Let $m\le n$ be least such that $f_R(v_i) <_L f_R(v_m)$ and notice
that $m>i+1$ by choice of $i$. By choice of $m$ it holds that $f_R(v_{m-1})
\le_L f_R(v_i) <_L f_R(v_m)$, and so that $f_L(v_{m}) \le_L f_R(v_{m-1})$
because $v_{m-1}\E v_m$. To summarise we get that $f_L(v_{m}) \le_L f_R(v_i)
<_L f_R(v_m)$, namely that $v_i \E v_m$ contrary to the definition of
\minpath.

The second conjunct of (i) follows from the first considering the interval
representation given by the linear order $(L,>_L)$ and by the maps $f_L$ and
$f_R$.\smallskip

For (ii), let $v_0 \E \dots \E v_n$ be a \minpath\ and  $F(v) <_L F(v_0) <_L
F(v_n)$. Assume $v \E v_i$, for some $i > 1$ (notice that $\neg v \E v_0$ by
assumption). Since $f_R(v) <_L f_L(v_0)$ by assumption, $f_R(v_0) <_L
f_R(v_i)$ by (i), and  $f_L(v_i) <_L f_R(v)$ by $v \E v_i$, it holds that
$v_0 \E v_i$, contrary to the definition of \minpath.
\end{proof}

\section{Uniquely orderable connected interval graphs} \label{Sec:UniquelyOrderable}

In this section we prove \cref{MainTheorem}. Suppose $(V,E)$ is a connected
incomparability graph. Saying that $(V,E)$ is not uniquely orderable amounts
to check that there are two partial orders $\prec$ and $\prec'$ associated to
$(V,E)$ and three vertices $a,b,c \in V$ such that $a \prec b \prec c$ and $b
\prec' a \prec' c$. The vertices $a$ and $b$ can be reordered regardless, so
to speak, the order of $c$. The connected graph pictured (by one of its
interval representations) in Figure \ref{Fig3UniqOrd} is an example of a non
uniquely orderable connected interval graph (in fact the intervals for $a$
and $b$ can be swapped without changing their relationship with the intervals
$c$ and $k$).
\begin{figure}
\begin{centering}
  \begin{displaymath}
\xymatrix@=10pt@R=1pt@C=3pt@M=10pt{ &\ar@{-}[rr]^-{a} && \ar@{-}[rr]^-{b} && \ar@{-}[rr]^-{c} &&   \\
                       \ar@{-}[rrrrrr]^-{k} &&&&&&  &&}
\end{displaymath}
\caption{\small An interval representation of a non uniquely orderable connected interval graph} \label{Fig3UniqOrd}
\end{centering}
\end{figure}
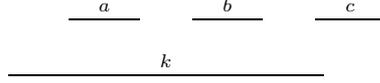

The first characterization of uniquely orderable interval graphs exploits the
above observation to identify subgraphs which are forbidden in uniquely
orderable interval graphs.

\begin{definition} \label{defbburied}
Let $(V,E)$ be a graph. For $B \subseteq V$ let $K(B)=\{v \in V \mid \forall
b\in B \, (v\E b)\}$ and $R(B)=V \setminus (B \cup K(B))$. We say that $B$ is
a \emph{buried subgraph} of $(V,E)$ if the following hold:
\begin{enumerate}[label=(\roman*)]
\item there exist $a,b\in B$ such that $\neg a \E b$,
\item $K(B) \cap B = \emptyset$ and $R(B) \ne \emptyset$,
\item if $b \in B$ and $r \in R(B)$, then $\neg b \E r$. 
\end{enumerate}
\end{definition}

The last point in the previous definition implies that any path between a
vertex in $B$ and a vertex outside $B$ must go through a vertex in $K(B)$.
The main consequence of (iii), which we use many times without mention, is
that if $v \in V$ is such that there exist $a,b \in B$ such that $v \E a$ and
$\neg v \E b$, then $v \in B$ (because $\neg v \E b$ implies $v \notin K(B)$,
while $v \E a$ and (iii) imply $v \notin R(B)$).

Our definition of buried subgraph is slightly different from the one in
\cite{Fishburn}, but it is equivalent for the class of graphs studied by
Fishburn, i.e.\ connected interval graphs without universal vertices. Since
we allow universal vertices, in condition (ii) we substituted $K(B) \ne
\emptyset$ with $R(B) \ne \emptyset$ (the former condition implies the latter
if there are no universal vertices, the reverse implication holds if the
graph is connected by (iii)). Moreover we restated condition (iii) in
simpler, yet equivalent, terms.

The other main character of \cref{MainTheorem} is the graph $(W,Q)$.

\begin{definition}
If $(V,E)$ is a graph we let $W=\{(a,b) \in V \times V \mid \neg a \E b\}$
and (writing $ab$ in place of $(a,b)$ for concision) $ab \Q cd$ if and only
if $a \E c$ and $b \E d$.

If $ab$ and $cd$ are elements of $W$ which are connected by a path in $(W,Q)$
we write $ab \Qcon cd$.
\end{definition}

\begin{proposition} \label{PropertyQ}
Let $(V,E)$ be an interval graph and $\prec$ a partial order associated to
$(V,E)$. If $ab, cd \in W$ are such that $ab \Qcon cd$ and $a \prec b$, then
$c \prec d$. In particular we have $\neg ab \Qcon ba$.
\end{proposition}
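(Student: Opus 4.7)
The plan is to reduce to the one-step case and then argue with the interval representation. Let $(L,<_L,f_L,f_R)$ be a representation of $(V,E)$. By induction on the length of a $Q$-path from $ab$ to $cd$, it suffices to prove the claim when $ab \Q cd$ directly, because if $ab \Q p_1 \Q \cdots \Q p_{n-1} \Q cd$ and we have shown that at each step the $\prec$-orientation of the pair is preserved, then composing preserves it from $ab$ to $cd$.

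So assume $ab \Q cd$, i.e.\ $a \E c$ and $b \E d$, and assume $a \prec b$. We want $c \prec d$. Since $(c,d) \in W$, by the definition of $W$ and the fact that $\prec$ is associated to $(V,E)$, the vertices $c$ and $d$ are $\prec$-comparable, so it is enough to derive a contradiction from $d \prec c$. Translating into the representation, $a \prec b$ gives $f_R(a) <_L f_L(b)$, the hypothetical $d \prec c$ gives $f_R(d) <_L f_L(c)$, while $a \E c$ yields $f_L(c) \le_L f_R(a)$ and $b \E d$ yields $f_L(b) \le_L f_R(d)$. Chaining these four inequalities gives
\[
f_L(b) \le_L f_R(d) <_L f_L(c) \le_L f_R(a) <_L f_L(b),
\]
a contradiction. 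Hence $c \prec d$, which closes the induction.

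For the final statement, suppose toward a contradiction that $ab \Qcon ba$ for some $ab \in W$. Since $\neg a \E b$, the vertices $a$ and $b$ are $\prec$-comparable in any partial order $\prec$ associated to $(V,E)$; without loss of generality $a \prec b$. Applying the proposition to $cd := ba$ we obtain $b \prec a$, contradicting $a \prec b$.

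The only mildly delicate point is making sure that the one-step case really handles the degenerate possibilities $a=c$ or $b=d$; but the chained inequality above is valid in those cases too (using reflexivity of $E$), so no case split is needed, and the inductive lifting to arbitrary $\Qcon$-paths is immediate.
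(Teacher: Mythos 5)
Your reduction to the one-step case, the inductive lifting along the $Q$-path, and the deduction of $\neg ab \Qcon ba$ are all fine, but the one-step argument has a genuine gap: the translation ``$a \prec b$ gives $f_R(a) <_L f_L(b)$'' (and likewise ``$d \prec c$ gives $f_R(d) <_L f_L(c)$'') is unjustified. The order $\prec$ in the statement is an \emph{arbitrary} partial order associated to $(V,E)$, not the interval order induced by the representation $(L,<_L,f_L,f_R)$ you fixed. All that $a \prec b$ yields is $\neg a \E b$, i.e.\ $F(a)$ and $F(b)$ are disjoint, with no control over which interval lies to the left; the graph in Figure \ref{Fig3UniqOrd} admits an associated order in which $b \prec a$ even though $F(a) <_L F(b)$ in the pictured representation. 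Conflating $\prec$ with the representation order begs exactly the question the proposition (and the whole paper) is about, namely that several associated orders may exist; if $\prec$ always agreed with the representation, unique orderability would never be an issue. Concretely, your contradiction chain collapses in the case $d \prec c$ but $F(c) <_L F(d)$, which your hypotheses do not exclude.

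The repair has to connect $\prec$ to the graph structure rather than to the representation. The route taken in the paper: from $a \E c$, $b \E d$, $ab, cd \in W$ one shows that $b \E c$ and $a \E d$ cannot both hold (if the four vertices are distinct, $a \E c \E b \E d \E a$ would be a chordless simple $4$-cycle, contradicting \cref{char_ig}; if they are not all distinct, one of $ab \in W$, $cd \in W$ fails). Then, say $\neg b \E c$: since $a \prec b$ and $a \E c$ force $c \prec b$ by transitivity, and $d \E b$ then forces $c \prec d$. (You could also obtain the ``not both $b \E c$ and $a \E d$'' step directly from the representation by an interval-overlap argument, but the subsequent reasoning must be carried out abstractly with $\prec$.) With that substitution your induction and the final statement go through unchanged.
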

\begin{proof}
Suppose first that $ab \Q cd$, so that $a \E c$ and $b \E d$. Notice that $b
\E c$ and $a \E d$ cannot both hold. In fact, if $a,b,c,d$ are not all distinct, then this would contradict $ab \in W$ or $cd \in W$. Otherwise,  $a \E c \E b \E d \E a$ would
be a simple cycle of length four without chords, against \cref{char_ig}. If
$\neg b \E c$, then $c \prec b$ because $a \prec b$ and $a \E c$. From this
we obtain $c \prec d$, since $d \E b$. If instead $\neg a \E d$ we obtain
first $a \prec d$ and then again $c \prec d$.

To derive $c \prec d$ from $ab \Qcon cd$ it suffices to apply the
transitivity of $\prec$ to a $Q$-path connecting $ab$ with $cd$.
\end{proof}

The last part of the previous proposition implies that if $W \neq \emptyset$
(which is equivalent to $(V,E)$ being not complete), then $(W,Q)$ has at least
two components. Moreover, if $(W,Q)$ has more than two (and so at
least four) components, then for every partial order $\prec$
associated to $(V,E)$ there exist $ab, cd \in W$ such that $a \prec b$ and $c
\prec d$, yet $ab \Qcon cd$ fails.\medskip

We split, as originally done by Fishburn, the proof of \cref{MainTheorem} in
three steps corresponding to (1) implies (2) (\cref{tuniqord}), (3) implies
(1) (\cref{QuniqordRCA}), and (2) implies (3) (\cref{SufCondition}). The
proof of the first implication in \cite{Fishburn} is not completely accurate,
and we apply Fishburn's idea after a preliminary step which is necessary even
when the graph is finite. The second implication is straightforward and
applies to interval graphs of any cardinality. The proof of the last
implication is completely new and requires more work.

The connectedness of the graph is not needed in the first two implications.
Moreover, the hypotheses of \cref{tuniqord} could be further relaxed, as the
proof applies to arbitrary incomparability graphs.

\begin{lemma}\label{tuniqord}
Every uniquely orderable interval graph does not contain a buried subgraph.
\end{lemma}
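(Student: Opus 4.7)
My approach is by contraposition: assuming $(V,E)$ is an interval graph containing a buried subgraph $B$, I construct two partial orders associated to $(V,E)$ that are not duals of each other. Fix any associated partial order $\prec$ (which exists because $(V,E)$ is an interval graph) and, using condition (i), pick $a,b\in B$ with $\neg a\E b$; without loss of generality $a\prec b$. Following Fishburn's idea, define $\prec'$ by reversing $\prec$ inside $B$ and leaving it untouched otherwise: for $x\neq y$, set $x\prec' y$ iff either $x,y\in B$ and $y\prec x$, or $\{x,y\}\not\subseteq B$ and $x\prec y$.

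Assuming $\prec'$ is a partial order, checking that it is associated to $(V,E)$ reduces in each case to the observation that $x\prec' y\lor y\prec' x$ unfolds to $x\prec y\lor y\prec x$, which is $\neg x\E y$; irreflexivity and antisymmetry of $\prec'$ follow immediately from the definition. For non-duality, condition (ii) gives $r\in R(B)$; by (iii) $\neg a\E r$, so $a\prec r$ or $r\prec a$, and either orientation is preserved in $\prec'$, whereas the flip inside $B$ sends $a\prec b$ to $b\prec' a$, so $\prec'$ is neither $\prec$ nor $\prec^{\mathrm{op}}$.

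The crux is transitivity of $\prec'$. A case analysis shows that the only obstruction is the possibility that some $r\in R(B)$ is \emph{straddled} by $B$ in $\prec$, i.e., $b_1\prec r\prec b_2$ for some $b_1,b_2\in B$: in that case $\prec'$ preserves $b_1\prec' r\prec' b_2$ while the flip within $B$ produces $b_2\prec' b_1$, creating a cycle. This is precisely the inaccuracy in Fishburn's original argument. The preliminary step required to close it is to show that one may always choose $B$ so that no $r\in R(B)$ straddles $B$ in $\prec$. Given any buried $B$ with a straddling $r$, a natural approach is to partition $B=B_+\sqcup B_-$ according to whether $b\prec r$ or $r\prec b$ (exhaustive because $r\in R(B)$ is comparable to every element of $B$ by (iii)); transitivity of $\prec$ then forces $\neg b_+\E b_-$ for $b_\pm\in B_\pm$, and condition (iii) of buriedness transfers to either half. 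One then argues that a suitably chosen half, or alternatively the buried pair $\{a,r\}$ when both halves collapse to singletons, remains a buried subgraph with strictly fewer straddling elements, iterating to reach a straddle-free buried subgraph to which the flip construction applies. The main obstacle I expect is preserving condition (i) — the existence of a non-adjacent pair — through this reduction, which is the delicate point of the preliminary step.
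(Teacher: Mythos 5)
Your setup (contraposition, Fishburn's flip of $\prec$ inside $B$, the check that $\prec'$ is associated to $(V,E)$, and the use of $r\in R(B)$ to rule out duality) matches the paper, and you correctly locate the flaw in Fishburn's argument: transitivity of the flipped order can fail exactly when some vertex outside $B$ is $\prec$-strictly between two elements of $B$, i.e.\ when $B$ is not $\prec$-convex. However, the \lq\lq preliminary step\rq\rq\ you need to close this is exactly where your proposal has a genuine gap, and the route you sketch does not work. Keeping $\prec$ fixed and shrinking $B$ runs into the problem you yourself flag, and more: the halves $B_+,B_-$ determined by a straddling $r$ need not satisfy condition (i) of \cref{defbburied} (either half can be a clique, even a singleton), and then the requirement $K(B_\pm)\cap B_\pm=\emptyset$ in condition (ii) fails as well; the fallback pair $\{a,r\}$ is almost never buried, since condition (iii) for a two-element set forces every neighbour of $a$ to be a neighbour of $r$ and vice versa, which nothing guarantees; and an iteration that decreases \lq\lq the number of straddling elements\rq\rq\ has no reason to terminate in an infinite graph, which is precisely the setting the lemma is meant to cover (buried subgraphs can moreover be unique, as in the construction of the last section, so there may be no smaller buried subgraph to pass to at all).

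The paper closes the gap in the opposite way: it keeps $B$ and changes the order. Fix $b_0\in B$ and define $\prec$ from the given associated order $\prec_0$ by leaving $\prec_0$ unchanged inside $B$ and inside $V\setminus B$, and by declaring, for $b\in B$ and $v\notin B$, that $b\prec v$ iff $b_0\prec_0 v$ and $v\prec b$ iff $v\prec_0 b_0$. This is legitimate because every $v\notin B$ is either in $K(B)$, hence adjacent to all of $B$ and so $\prec_0$-incomparable to all of it, or in $R(B)$, hence by condition (iii) non-adjacent to, and so $\prec_0$-comparable to, all of $B$; consequently $\prec$ is a partial order associated to the same graph, and by construction $B$ is $\prec$-convex. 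Applying the flip to this $\prec$ (your second and third paragraphs) then goes through verbatim. So the overall architecture of your proof is right, but the key preliminary step is missing and would need to be replaced by an argument of this kind rather than a reduction of $B$.
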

\begin{proof}
Let $(V,E)$ be an interval graph with a buried subgraph $B$. Fix a partial
order $\prec_0$ associated to $(V,E)$ and some $b_0 \in B$. We define a new
binary relation $\prec$ on $V$ as follows: when either $u,v \in B$ or $u,v
\notin B$ set $u \prec v$ if and only if $u \prec_0 v$; when $b \in B$ and $v
\notin B$ set $b \prec v$ if and only if $b_0 \prec_0 v$, and $v \prec b$ if
and only if $v \prec_0 b_0$. Thus the whole $B$ is $\prec$-above the elements
not in $B$ which are $\prec_0$-below $b_0$ and $\prec$-below the elements not
in $B$ which are $\prec_0$-above $b_0$.

Using the fact that the vertices not in $B$ are either $\prec_0$-incomparable
to every vertex of $B$ or $\prec_0$-comparable to every vertex of $B$, it is
straightforward to check that $\prec$ is transitive, and hence a partial
order. For the same reason $\prec$ is associated to $(V,E)$. The key feature
of $\prec$ (not necessarily shared by $\prec_0$) is that $B$ is
$\prec$-convex, i.e.\ if $b \prec v \prec b'$ with $b,b' \in B$, then $v \in
B$ as well. Indeed, if $v \notin B$, then $b \prec v$ implies $b_0 \prec_0 v$
and $v \prec b'$ implies $v \prec_0 b_0$.

Following now \cite{Fishburn}, let $\prec'$ be such that the restrictions of
$\prec$ and $\prec'$ to $B$ are dual, while $\prec'$ and $\prec$ coincide on
$V \setminus B$ and between elements of $B$ and $V \setminus B$. Formally, $u
\prec' v$ if and only if either $u,v \in B$ and $v \prec u$, or if at least
one of $u$ and $v$ does not belong to $B$ and $u \prec v$. The transitivity
of $\prec'$ is a consequence of the $\prec$-convexity of $B$ (an observation
lacking in the proof given in \cite{Fishburn}) and hence $\prec'$ is a
partial order associated to $(V,E)$.

If $x,y \in B$ are such that $x \prec y$ and $v \in R(B)$ (these elements
exist by \cref{defbburied}) we have either $x \prec y \prec v$ or $v \prec x
\prec y$. In the first case $y \prec' x \prec' v$, in the second case $v
\prec' y \prec' x$, witnessing that $\prec'$ is neither $\prec$ nor the dual
order of $\prec$.
\end{proof}

\begin{lemma}\label{QuniqordRCA}
Let $(V,E)$ be an interval graph. If ($W,Q$) has two components,
then $(V,E)$ is uniquely orderable.
\end{lemma}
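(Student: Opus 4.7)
The plan is to show that if $\prec$ and $\prec'$ are two partial orders both associated to $(V,E)$, then either $\prec'=\prec$ or $\prec'$ is the dual of $\prec$. Let $C_1,C_2$ denote the two components of $(W,Q)$. The whole argument rests on a small duality observation together with \cref{PropertyQ}.

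First I would observe that $C_1$ and $C_2$ are swaps of each other: for every $ab \in W$ the pair $ba$ also lies in $W$, and by the last clause of \cref{PropertyQ} we have $\neg ab \Qcon ba$. Hence the coordinate-swap $ab\mapsto ba$ is an involution of $W$ that sends $C_1$ onto $C_2$ and vice versa; in particular, exactly one of $ab$ and $ba$ lies in each $C_i$.

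Next, for each partial order $\prec$ associated to $(V,E)$, set $S_\prec = \{ab \in W \mid a \prec b\}$. I claim that $S_\prec \in \{C_1, C_2\}$. That $S_\prec$ is a union of $Q$-components is exactly the content of \cref{PropertyQ}. Moreover, for every $ab \in W$ we have $\neg a \E b$, and since $E$ is reflexive we have $a \ne b$, so $a$ and $b$ are $\prec$-comparable (by the definition of ``associated''); hence exactly one of $ab$ and $ba$ belongs to $S_\prec$. Combined with the swap duality of $C_1$ and $C_2$, this forces $S_\prec$ to coincide with $C_1$ or with $C_2$.

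Finally, the set $S_\prec$ determines $\prec$ completely: on pairs $ab \in W$ we have $a \prec b$ iff $ab \in S_\prec$, while on distinct pairs $u,v$ with $u \E v$ the vertices are $\prec$-incomparable because $\prec$ is associated to $(V,E)$. Given another associated order $\prec'$, either $S_\prec = S_{\prec'}$, in which case $\prec = \prec'$, or $S_\prec \ne S_{\prec'}$, in which case $\{S_\prec, S_{\prec'}\} = \{C_1, C_2\}$ and the swap duality yields $a \prec' b$ iff $b \prec a$, so $\prec'$ is the dual of $\prec$. I do not anticipate any substantive obstacle: once \cref{PropertyQ} is in hand the proof is essentially an unpacking of the definitions, the only point needing some care being the swap-duality between $C_1$ and $C_2$, which is what allows us to conclude that $S_\prec$ is a \emph{single} component rather than an arbitrary union of components.
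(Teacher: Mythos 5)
Your proposal is correct and takes essentially the same route as the paper: the paper's own proof is just the remark that the lemma ``follows easily from \cref{PropertyQ}'', and your argument is precisely the natural unpacking of that remark (the swap involution on $W$, the fact that $S_\prec$ is a union of $Q$-components by \cref{PropertyQ}, and the two-component hypothesis forcing $S_\prec\in\{C_1,C_2\}$, hence uniqueness up to duality). The only cosmetic point is that unique orderability also requires the existence of at least one associated partial order, which is immediate here because any interval representation of $(V,E)$ induces one.
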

\begin{proof}
This follows easily from \cref{PropertyQ}.
\end{proof}

For the proof of \cref{SufCondition} we describe a construction that,
starting from a pair of non-adjacent vertices, attempts to build the minimal
buried subgraph containing those two vertices. We then show that if this
attempt always fails, then for any $ab, cd \in W$ either $ab \Qcon cd$ or $ab
\Qcon dc$.

\begin{construction}\label{ConstrB}
Let $(V,E)$ be a connected interval graph and $v,u \in V$ be such that $\neg v \E
u$. We define recursively $B_n(v,u) \subseteq V$:
\begin{align*}
 B_0(v,u) &=\{v,u\}\\
 B_{n+1}(v,u) &=\{w \in V \mid \exists z,z' \in B_n(v,u) \, (z\E w \land \neg z'\E w) \}
\end{align*}
We then set $B(v,u)=\bigcup B_n(v,u)$. If $w \in B(v,u)$ let $e_w$ be the
least $n$ such that $w \in B_n(v,u)$ (formally we should write $e_w^{v,u}$
but we omit the superscript as $v$ and $u$ will always be understood).
\end{construction}

A straightforward induction shows that $B_n(v,u)\subseteq B_{n+1}(v,u)$, for
each $n \in \nat$ (for the base step recall that interval graphs are
reflexive, so that $v$ and $u$ themselves witness that $v,u \in B_1(v,u)$).

We now show that $B(v,u)$ is close to being a buried subgraph.

\begin{property} \label{SemiBuried}
In the situation of \cref{ConstrB}, $B(v,u)$ is a buried subgraph if and only
if $R(B(v,u)) \ne \emptyset$.
\end{property}
\begin{proof}
Notice that Condition (i) of \cref{defbburied} is witnessed by $v$ and $u$.
Condition (3) is obvious, because if $r \notin K(B(u,v))$ but $b \E r$ for
some $b \in B(u,v)$, then $r \in B(u,v)$. Moreover, if $k \in K(B(v,u))$, then
$k \in K(B_n(u,v))$ for every $n$ and hence $k \notin B(u,v)$; hence $B(v,u)
\cap K(B(v,u)) = \emptyset$. Therefore, to verify that $B(v,u)$ is a buried
subgraph it suffices that $R(B(v,u)) \ne \emptyset$.
\end{proof}

In the next propositions, we will always consider a connected interval graph
$(V,E)$ with representation $(L,<_L,f_L,f_R)$, fix $v,u \in V$ with $\neg v
\E u$ and $F(v) <_L F(u)$ and define $B(v,u)$ as in \cref{ConstrB}. For
brevity, we call this set of hypotheses $(\maltese)$ and indicate it next to
the proposition number.

\begin{proposition}[\maltese]\label{Livelli}
Let $x,y \in B(v,u)$. If $F(u) <_L F(x)$ and $f_L(x) \le_L f_L(y)$, then $e_x
\le e_y$. Analogously, if $F(x) <_L F(v)$ and  $f_R(y) \le_L f_R(x)$, then $e_x
\le e_y$ as well.
\end{proposition}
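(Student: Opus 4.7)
The plan is to prove the first statement by induction on $e_y$, exploiting the observation that $u \in B_0 \subseteq B_n$ for every $n$ and that $\neg u \E x$ (since $F(u) <_L F(x)$), so $u$ is always a readymade non-adjacency witness for placing $x$ into some $B_{n+1}$. The base case $e_y = 0$ forces $y \in \{v,u\}$; combined with $F(v) <_L F(u) <_L F(x)$, the hypothesis $f_L(x) \le_L f_L(y)$ yields a chain of inequalities such as $f_R(u) <_L f_L(x) \le_L f_L(y) \le_L f_R(u)$, which is impossible, so the base case is vacuous.

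For the inductive step, suppose $e_y = n+1$ and fix witnesses $z, z' \in B_n$ for $y \in B_{n+1}$, i.e.\ $z \E y$ and $\neg z' \E y$. The key subclaim is that either $z \E x$ or else $F(x) <_L F(z)$: regardless of whether $f_L(z) \le_L f_L(y) \le_L f_R(z)$ or $f_L(y) \le_L f_L(z) \le_L f_R(y)$, combining with $f_L(x) \le_L f_L(y)$ yields $f_L(x) \le_L f_R(z)$, so $z \E x$ fails only when $f_R(x) <_L f_L(z)$. In the first case $z, u \in B_n$ with $z \E x$ and $\neg u \E x$, so $x \in B_{n+1}$ and $e_x \le e_y$. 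In the second case $f_L(x) <_L f_L(z)$ and $e_z \le n < e_y$, so the inductive hypothesis applied with $z$ in place of $y$ returns $e_x \le e_z \le n$, even better than needed.

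The second statement follows from the first by reversing the linear order $<_L$, a transformation that swaps $v$ with $u$ and $f_L$ with $f_R$, and leaves $B(v,u)$ unchanged since \cref{ConstrB} is symmetric in $v$ and $u$. The main obstacle I anticipated was locating a witness in $B_n$ adjacent to $x$, since obvious candidates like $y$ itself sit at level $e_y$, one too big. This is resolved by the subclaim: the very witness $z$ that places $y$ into $B_{n+1}$ already does the job for $x$, \emph{unless} $x$ lies strictly to the left of $z$, in which case the problem reduces to a smaller instance dispatched by the inductive hypothesis. Connectedness of $(V,E)$ plays no role in the argument.
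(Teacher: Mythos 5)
Your proof is correct and follows essentially the same route as the paper: induction on $e_y$ with a vacuous base case, using the adjacency witness $z\in B_{e_y-1}(v,u)$ for $y$ together with $u$ as the ever-present non-adjacency witness, and falling back on the induction hypothesis applied to $z$ in the remaining case. The only difference is cosmetic — you split on $z\E x$ versus $F(x)<_L F(z)$ where the paper splits on $f_L(x)\le_L f_L(z)$ versus $f_L(z)<_L f_L(x)$ — and your handling of the second half via the reversed representation matches the paper's as well.
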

\begin{proof}
We prove the first half of the statement by induction on $e_y$. The base case
is trivially satisfied since there is no $y \in B_0(v,u)$ satisfying the
hypotheses. Assume $e_y>0$ and let $z \in B_{e_y-1}(v,u)$ be such that $z \E
y$. If $f_L(x) \le_L f_L(z)$, then $e_x \le e_z < e_y$ by induction
hypothesis. Otherwise, $f_L(z) <_L f_L(x) \le f_L(y) \le_L f_R(z)$ given that
$z \E y$. This means that $z \E x$, which implies that $x \in B_{e_z+1}(v,u)$
since $u \in B_{e_z}(v,u)$ is such that $\neg u \E x$. Hence $e_x \le e_z+1
\le e_y$

The second half of the statement follows considering the representation
$(L,>_L,f_L,f_R)$.
\end{proof}

\begin{proposition}[\maltese]\label{CorollarioLivelli}
Let $w\in B(v,u)$. If $F(w) \nless_L F(u)$, then there exists a path $u \E b_1
\E \dots b_k \E w$ such that $e_{b_i} < e_w$ for all $i\le k$.

Analogously, if $F(v) \nless_L F(w)$, then there exists a path $v \E b_1 \E
\dots b_k \E w$ such that $e_{b_i} < e_w$ for all $i\le k$.
\end{proposition}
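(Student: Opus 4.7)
The plan is to prove the first half by induction on $e_w$; the second half then follows from the first by applying it to the reversed interval representation $(L,>_L,f_R,f_L)$ (in which the roles of $v$ and $u$ are interchanged), exactly as in the proof of \cref{PropMinpath}(i).

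For the base case $e_w = 0$, we have $w \in \{v,u\}$. Since $(\maltese)$ stipulates $F(v) <_L F(u)$, the hypothesis $F(w) \nless_L F(u)$ excludes $w = v$, leaving $w = u$, for which the trivial path (no intermediate vertices) satisfies the conclusion.

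For the inductive step, assume the claim holds for every vertex of $B(v,u)$ of level strictly less than $e_w$. If $u \E w$, the length-one path $u \E w$ witnesses the claim vacuously. Otherwise $\neg u \E w$ combined with $F(w) \nless_L F(u)$ forces $F(u) <_L F(w)$, i.e.\ $f_R(u) <_L f_L(w)$. By the inductive definition of $B_{e_w}(v,u)$ there exists $z \in B_{e_w - 1}(v,u)$ with $z \E w$. The key geometric observation is that $F(z) \cap F(w) \neq \emptyset$ together with $f_R(u) <_L f_L(w)$ forces $f_R(z) \ge_L f_L(w) >_L f_R(u)$, so $F(z) \nless_L F(u)$. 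Since $e_z < e_w$, the inductive hypothesis applies to $z$ and produces a path $u \E b_1 \E \dots \E b_k \E z$ with each $e_{b_i} < e_z$; appending $w$ yields $u \E b_1 \E \dots \E b_k \E z \E w$, in which $z$ becomes the last intermediate vertex and every intermediate vertex has level strictly less than $e_w$.

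The only delicate step, which I expect to be the main obstacle, is verifying that the witness $z$ supplied by the inductive definition of $B(v,u)$ in fact satisfies $F(z) \nless_L F(u)$, so that the inductive hypothesis becomes applicable at $z$. This is precisely why one must split off the easy case $u \E w$: only when $F(u) <_L F(w)$ strictly can one guarantee that every neighbour of $w$ has its right endpoint past $f_R(u)$.
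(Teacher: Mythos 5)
Your proof is correct, but it takes a different route from the paper's. The paper's argument unwinds \cref{ConstrB} directly: by definition of $B_{e_w}(v,u)$ there is a path $b_0 \E b_1 \E \dots \E b_k \E w$ with $b_i \in B(v,u)$ and $0 = e_{b_0} < e_{b_1} < \dots < e_{b_k} < e_w$, so $b_0 \in \{v,u\}$; if $b_0 = v$, a single application of \cref{SubPath} (with $u$ as the external vertex, using $F(u) \nless_L F(v)$ and $F(w) \nless_L F(u)$) reroutes the path so that it starts at $u$, and the second half follows by reversing the representation. You instead run an explicit induction on $e_w$ and never invoke \cref{SubPath}: after splitting off the easy case $u \E w$, you observe that $\neg u \E w$ forces $F(u) <_L F(w)$, so any $z \in B_{e_w-1}(v,u)$ adjacent to $w$ has $f_R(z) \ge_L f_L(w) >_L f_R(u)$ and hence $F(z) \nless_L F(u)$, which is exactly what is needed to apply the induction hypothesis at $z$; this endpoint comparison is sound, the appended vertex $z$ has $e_z \le e_w - 1 < e_w$, and the base case and the duality argument for the second half are handled correctly. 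What your version buys is self-containedness (only the definition of adjacency via endpoints is used) and the stronger by-product that every intermediate vertex of the path you build satisfies $F(\cdot) \nless_L F(u)$ or is adjacent to $u$; what the paper's version buys is brevity, delegating all the interval geometry to the already-proved \cref{SubPath} instead of redoing it inside the induction.
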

\begin{proof}
By definition of $B_{e_w}(v,u)$ there exists a path $b_0 \E b_1 \E \dots \E
b_k \E w$ where $b_i \in B(v,u)$ and $0=e_{b_0} < e_{b_1} < \dots < e_{b_k} <
e_w$. Hence $b_0 \in \{u,v\}$ and, since $F(w) \nless_L F(u)$ and $F(u)
\nless_L F(v)$, by \cref{SubPath} we can assume that $b_0=u$.

The second half of the statement follows from the first one as usual.
\end{proof}

\begin{proposition}[\maltese]\label{AltroCammino}
Let $x,z \in B(v,u)$ and $m = \max\{e_x,e_z\}$. Assume $F(z) <_L F(x)$ and
$F(v) \nless_L F(x)$ (this implies $m>0$). Then there exists a \minpath\ $z
\E v_1 \E \dots \E v_n \E x$ and $s \in B(v,u)$ with $e_s<m$ such that
$e_{v_i} < m$ and $F(v_i) <_L F(s)$ for each $i \le n$.

Analogously, if $F(x) <_L F(z)$ and $F(x) \nless_L F(u)$ there exists a
\minpath\ $x \E v_1 \E \dots \E v_n \E z$ and $s \in B(v,u)$ with $e_s<m$
such that $e_{v_i} < m$ and $F(s) <_L F(v_i)$ for each $i \le n$.
\end{proposition}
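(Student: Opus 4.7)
The plan is to prove the first half by strong induction on $m=\max\{e_x,e_z\}$; the second half follows by the standard duality of reversing $(L,<_L)$ and interchanging the roles of $f_L,f_R$. For the base case $m=1$, at least one of $x,z$ lies in $B_1(v,u)\setminus B_0(v,u)$, so the definition of $B_1(v,u)$ together with the hypotheses $F(z)<_L F(x)$ and $F(v)\nless_L F(x)$ restricts the pair $(x,z)$ to a handful of geometric configurations, each of which admits a short path by inspection with $s$ chosen from $\{u,v\}$.

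For the inductive step, I would assume without loss of generality that $e_x=m$ (the case $e_z=m$ being analogous). The definition of $B_m(v,u)$ yields $y\in B_{m-1}(v,u)$ with $y\E x$, and \cref{CorollarioLivelli} applied to $x$ using $F(v)\nless_L F(x)$ produces a path $v\E b_1\E\dots\E b_k\E x$ whose intermediate vertices all have level strictly less than $m$. An analogous route connecting $z$ to a vertex of small level---either via a second application of \cref{CorollarioLivelli}, or via the induction hypothesis applied to $z$ paired with some $b_i$ of strictly smaller maximal level---allows me to splice together a path from $z$ to $x$ whose intermediate vertices all have level $<m$. \cref{Refine} then refines it to a minpath, and the level bound persists because refinement only deletes interior vertices.

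Identifying the bounding witness $s$ is the delicate step, and I would split according to the two ways $F(v)\nless_L F(x)$ can hold. If $F(x)<_L F(v)$, the natural candidate is $s=v$, and the inequality $F(v_i)<_L F(v)$ reduces to ruling out any adjacency $v_i\E v$ along the minpath via \cref{PropMinpath}(ii) applied to the minpath extended by one edge toward $v$, with \cref{Livelli} forcing any would-be violator up to level $\ge m$ and thus contradicting $e_{v_i}<m$. If instead $v\E x$, then $s$ must come from $B_{m-1}(v,u)$, and I would use the witness $y$ (or another element of $B_{m-1}(v,u)$ lying sufficiently to the right), with \cref{Livelli} certifying $e_s<m$.

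The principal obstacle is this dominance condition: \cref{PropMinpath}(i) forces the right endpoints of consecutive $v_i$ to increase strictly along a minpath, so a single $s$ bounding all $v_i$ on the right exists only if the entire minpath stays inside a bounded zone. The induction hypothesis is what makes this compatible with the construction, since it supplies a bounded minpath already at level $m-1$, which can be extended by a single edge to $x$ without destroying the bound.
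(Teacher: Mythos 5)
The gap sits exactly where you locate the delicacy: producing the witness $s$ with $F(v_i) <_L F(s)$, and neither of your two prescriptions for $s$ works. By \cref{PropMinpath}(i) the whole condition reduces to $F(v_n) <_L F(s)$ for the \emph{last} intermediate vertex, and that is precisely the vertex your tools cannot control. In the case $F(x) <_L F(v)$ you take $s=v$ and want to exclude $v_i \E v$ via \cref{PropMinpath}(ii); but that property forbids adjacency to $v$ only for the path vertices other than the one next to the endpoint on $v$'s side, i.e.\ it says nothing about $v_n$, and $v_n \E v$ can genuinely occur. Concretely, take intervals $z=[0,1]$, $w=[0.5,5.5]$, $x=[2,3]$, $v=[5,6]$, $t=[5.5,7.5]$, $u=[7,8]$: here $e_w=1$, $e_x=e_z=2=m$, the hypotheses of the proposition hold, the unique $z$--$x$ \minpath\ is $z \E w \E x$, and $w \E v$, so no minpath with $F(v_n)<_L F(v)$ exists; the proposition is saved only because one may take $s=u$. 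Nothing in \cref{Livelli} pushes $w$ up to level $\ge m$, so your ``would-be violator'' argument does not apply. In the case $v \E x$ your candidate $s=y$ (a positive witness for $x \in B_m$) is equally unjustified: $y$ merely meets $F(x)$ and may stretch arbitrarily far to the left, so there is no reason it dominates the path. A smaller point: ``WLOG $e_x=m$'' is not a symmetry, since the hypotheses $F(z) <_L F(x)$ and $F(v) \nless_L F(x)$ distinguish $x$ from $z$.

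For comparison, the paper does not induct on $m$ at all. It applies \cref{CorollarioLivelli} to both $x$ and $z$ (note $F(v)\nless_L F(z)$ follows from the hypotheses), joins the two paths through $v$, refines by \cref{Refine}, and then --- the idea your proposal is missing --- among all such minpaths chooses one minimizing $j=e_{v_n}$. If $j=0$ it reroutes so that $v_n=v$ and takes $s=u$; if $j>0$ it takes $s$ to be a \emph{negative} witness for $v_n\in B_j(v,u)$ (an element of level $<j$ non-adjacent to $v_n$) and proves $F(v_n)<_L F(s)$ by contradiction: assuming $F(s)<_L F(v_n)$, it uses \cref{CorollarioLivelli} and \cref{SubPath} to splice a new $z$--$x$ path whose last intermediate vertex has level below $j$, contradicting minimality. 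Without this minimization-plus-contradiction mechanism (or some replacement for it), your inductive frame does not close, because the single edge you append to reach $x$ is exactly the step at which the dominance bound can break.
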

\begin{proof}
Notice that once we find the \minpath\ $z \E v_1 \E \dots \E v_n \E x$ and $s
\in B(v,u)$ with $e_s<m$ such that $e_{v_i} < m$ for all $i \le n$ it
suffices to prove that $F(v_n) <_L F(s)$, since then $F(v_i) <_L F(s)$ for
$i<n$ follows from \cref{PropMinpath}.i.

We can apply \cref{CorollarioLivelli} to both $x$ and $z$ obtaining paths
connecting $v$ to $x$ and $v$ to $z$ and with $e_b<m$ for all vertices $b$,
distinct from $x$ and $z$, occurring in the paths. Joining these paths and
then using \cref{Refine} we obtain a \minpath\ $z \E v_1 \E \dots \E v_n \E
x$ with $e_{v_i}<m$. Notice that $n>0$ as $\neg z \E x$. Let $j < m$ be such
that $e_{v_n}=j$: we may assume $j$ is least for which such a \minpath\
exists.

If $j=0$, then we claim that we can assume $v_n=v$ and hence we can choose
$s=u$. In fact, if $v_n=u$, then $F(x) <_L F(v)$ is impossible and we have $v
\E x$. The hypotheses imply $F(v) \nless_L F(z)$ and, since $F(u) \nless_L
F(v)$, by \cref{SubPath} we can find $i<n$ such that $v \E v_i$ and consider
the path $z \E v_1 \E \dots \E v_i \E v \E x$.

We now assume $j>0$: there exists $s \in B(v,u)$ such that $e_s<j$ and $\neg
s \E v_n$. We claim that $F(v_n) <_L F(s)$, completing the proof. Suppose on
the contrary that $F(s) <_L F(v_n)$ ($F(v_n) \cap F(s) \neq \emptyset$ cannot
hold because $\neg s \E v_n$).

In this case we have $F(s) <_L F(x)$ because $f_L(v_n) <_L f_L(x)$ by
\cref{PropMinpath}.i. Hence $F(v) \nless_L F(s)$ and we can use
\cref{CorollarioLivelli} and \cref{Refine} to obtain a \minpath\ $s \E u_1 \E
\dots \E u_\ell$, with $u_\ell = v$ and $e_{u_i}<e_s$. Since $F(x) \nless_L
F(s)$ and $F(v) \nless_L F(x)$ by \cref{SubPath} there exists $k \le \ell$
such that $u_k \E x$. We distinguish two cases: $F(z) \nless_L F(s)$ and
$F(z) <_L F(s)$.

In the first case we apply \cref{SubPath} to the path $s \E u_1 \E \dots \E
u_k \E x$: there exists $h \le k$ such that $z \E u_h$. Since $z \E u_h \E
\dots \E u_k \E x$ can be refined to a \minpath\ and $e_{u_i}<j$, the
minimality of $j$ is contradicted.

In the second case we apply \cref{SubPath} to the path $z \E v_1 \E \dots \E
v_n \E x$: there exists $h < n$ (recall that $\neg v_n \E s$) such that $v_h
\E s$.  Then $z \E v_1 \E \dots \E v_h \E s \E u_1 \E \dots \E u_k \E x$ can be refined to a
path, which can then be refined to a \minpath\ $z \E w_1 \E \dots \E w_r \E x$.
Notice that $w_r=u_p$, for some $p \le k$ because $\neg v_i \E x$ for every
$i \leq h<n$, by minimality of the path $z \E v_1 \E \dots \E v_n \E x$, and
$F(s) <_L F(x)$. Since $e_{w_r}<j$ we contradict again the minimality of $j$.

The second half of the statement follows from the first one as usual.
\end{proof}

\begin{lemma}[\maltese]\label{lemmaQcammini}
If $x,y \in B(v,u)$, $f_R(x) \le_L f_R(v)$ and $f_L(u) \le_L f_L(y)$, then $vu
\Qcon xy$.
\end{lemma}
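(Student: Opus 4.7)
The plan is to induct on $n = e_x + e_y$. Before beginning, observe that $f_R(x) \le_L f_R(v) <_L f_L(u) \le_L f_L(y)$ forces $F(x) <_L F(y)$, so $(x,y) \in W$ and the statement is well-posed. For the base case $n = 0$, the hypotheses force $(x,y) = (v,u)$ and $vu \Qcon vu$ is the trivial zero-length path.

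For the inductive step, by the symmetry used throughout this section (reversing the linear order and exchanging the roles of $v$ and $u$) I may assume $e_y > 0$. The strategy is to produce a vertex $y^* \in B(v,u)$ satisfying $e_{y^*} < e_y$, $y^* \E y$, $\neg\, x \E y^*$, and $f_L(u) \le_L f_L(y^*)$. Once such a $y^*$ is in hand, the pair $(x, y^*)$ still satisfies the hypotheses of the lemma with $e_x + e_{y^*} < n$, so the inductive hypothesis yields $vu \Qcon xy^*$; the single $Q$-step $xy^* \Q xy$ (via $x \E x$ and $y^* \E y$, both pairs in $W$) then completes the argument.

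If $u \E y$, then $y^* = u$ works immediately. Otherwise $F(u) <_L F(y)$, and I invoke the second form of \cref{AltroCammino} with $p = u$ and $q = y$: this produces a \minpath\ $u \E v_1 \E \dots \E v_k \E y$ with each $e_{v_i} < e_y$, together with an auxiliary $s \in B(v,u)$ satisfying $e_s < e_y$ and $F(s) <_L F(v_i)$ for every $i$. Provided $k \ge 2$, the natural choice $y^* = v_k$ does the job: $\neg\, v_k \E x$ follows from \cref{PropMinpath}.ii applied with $w = x$ (since $F(x) <_L F(u)$), which gives $\neg\, v_i \E x$ for every $i \neq 1$; and $f_L(u) \le_L f_L(v_k)$ holds because otherwise combining with $v_k \E y$ and $f_L(u) \le_L f_L(y)$ would force $v_k \E u$, contradicting the \minpath\ property $\neg\, v_0 \E v_k$ valid when $k \ge 2$.

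The main obstacle I anticipate is the residual sub-case $k = 1$, where the \minpath\ collapses to $u \E v_1 \E y$. Here the obvious candidate $y^* = v_1$ may fail on two counts: either $v_1 \E x$ (violating $\neg\, x \E y^*$) or $f_L(v_1) <_L f_L(u)$ (violating the left-endpoint condition). A short interval-geometric computation shows that in the troublesome configuration $v_1$ ends up adjacent to each of $v, u, x, y$ simultaneously, while $s$ lies strictly to the left of $v_1$. The plan in this sub-case is to bypass the direct reduction by threading the $Q$-path through pairs such as $(s, v_1)$, $(s, y)$, and $(v, y)$, performing a further case split on whether $s$ is adjacent to $v$ and to $x$, and appealing to the inductive hypothesis on the strictly smaller configurations involving $s$ in place of $v_1$.
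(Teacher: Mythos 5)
Your reduction framework (induction on $e_x+e_y$, replacing $y$ by a lower-level neighbour $y^*$ with $f_L(u)\le_L f_L(y^*)$ and taking the single step $xy^*\Q xy$) is sound, and the cases you actually execute are correct: the base case, the case $u\E y$, the case $k\ge 2$, and also the case $\neg x\E v_1$ with $f_L(v_1)<_L f_L(u)$, which you lump into the "troublesome" configuration but which is in fact easy ($vu\Qcon xu\Q xv_1\Q xy$, using the induction hypothesis on $(x,u)$ and the edges $u\E v_1\E y$). After these reductions the genuinely troublesome configuration is $k=1$ with $x\E v_1$, so that $f_L(v_1)\le_L f_R(x)\le_L f_R(v)<_L f_L(u)$ and $v_1$ indeed meets all of $v,u,x,y$, with $F(s)<_L F(v_1)$. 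This is not a residual technicality to be dispatched by a "short computation": it is the heart of the lemma, and the plan you give for it does not close. The moves you list do yield $vu\Qcon su\Q sv_1\Q sy$ (the induction hypothesis applies to $(s,u)$ because $f_R(s)<_L f_L(v_1)\le_L f_R(v)$ and $e_s<e_{v_1}\le e_y-1$), and if $s\E x$ then $sy\Q xy$ finishes. But when $\neg s\E x$ (hence $F(s)<_L F(x)$) you are stranded at $sy$ and must produce $sy\Qcon xy$: a $Q$-path moving the first coordinate from $s$ to $x$ while the second coordinate stays at $y$. Your induction hypothesis gives no purchase here, since it only ever produces pairs whose second coordinate has left endpoint $\ge_L f_L(u)$; what is actually needed is a path from $s$ to $x$ in $(V,E)$ all of whose vertices are non-adjacent to $y$, and none of the pairs $(s,v_1)$, $(s,y)$, $(v,y)$ you mention supplies one (indeed $(v,y)$ cannot even be reached from $vu$ without already solving this same problem).

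Producing that path is precisely what the second half of the paper's proof does, and it requires machinery absent from your sketch: a further application of \cref{AltroCammino} to obtain a \minpath\ from a low-level vertex to $x$ whose intermediate vertices $w$ all satisfy $F(w)<_L F(s')$ for some $s'$ with $e_{s'}<e_y$, together with \cref{Livelli} to turn any hypothetical adjacency $w\E y$ into the contradiction $e_y\le e_{s'}<e_y$. This in turn needs the level bookkeeping the paper carries out (showing the relevant vertices lie in $B_{e_y-2}(v,u)$ or $B_{e_y-1}(v,u)$), for which your choice of predecessor via \cref{AltroCammino} rather than via the definitional witness $z\in B_{e_y-1}(v,u)$ would require its own verification. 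Until this sub-case is argued in full, the proof has a genuine gap exactly where the lemma is hardest.
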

\begin{proof}
The proof is by induction on $e_x + e_y$. If $e_x + e_y=0$, then $x=v$ and
$y=u$, so that the conclusion is immediate (recall that $Q$ is reflexive).
Now assume that $e_x + e_y>0$ and suppose $e_x \le e_y$ (if $e_y<e_x$ we can
employ the usual trick of reversing the representation) and hence $e_y>0$.

If $u \E y$, then $xu \Q xy$ and, since the induction hypothesis implies $vu
\Qcon xu$ (because $e_u=0$), we obtain $vu \Qcon xy$. Thus we assume $\neg u
\E y$ and hence $F(u) <_L F(y)$. Let $z \in B_{e_y-1}(v,u)$ be such that $y
\E z$. If $f_L(u) \le_L f_L(z)$, then we can apply the induction hypothesis to
$xz$ obtaining $vu \Qcon xz$. Since $xz \Q xy$, we are done.

We thus assume $f_L(z) <_L f_L(u)$ which, together with $F(u) <_L F(y)$ and
$z \E y$, implies $f_R(u) <_L f_R(z)$ and hence $z \E u$. Notice moreover
that $z \neq u$ and hence (since $z \neq v$ is obvious) $e_y>1$.
%
If $\neg x \E z$, then $xu \Q xz \Q xy$ and, since by induction hypothesis
$vu \Qcon xu$, we have $vu \Qcon xy$. If instead $x \E z$ we must have
$f_L(z) \le_L f_R(x) \le_L f_R(v)$. Let $t \in B_{e_y-2}(v,u)$ be such that
$\neg t \E z$. If $F(z) <_L F(t)$, then $F(u) <_L F(t)$ and $f_L(y) <_L
f_L(t)$, so that \cref{Livelli} implies $y \in B_{e_y-2}(v,u)$, which is
impossible. Hence $F(t) <_L F(z)$. This implies $f_R(t) <_L f_R(x)$. It
follows that $x \in B_{e_y-1}(v,u)$, either by \cref{Livelli}, if $F(x)<_L
F(v)$, or because $x \in B_1(v,u)$ if $x \E v$, given that $\neg x \E u$.
Since $vu \Qcon tu$ holds by induction hypothesis and we have also $tu \Q tz
\Q ty$ it suffices to show that $ty \Qcon xy$.

If $t \E x$ the conclusion is immediate, otherwise $F(t) <_L F(x)$. Since
$F(v) \nless_L F(x)$ we can apply \cref{AltroCammino} finding a \minpath\ $t
\E u_1 \E \dots \E u_n \E x$ and $s \in B_{e_y-2}(v,u)$ such that $u_i \in
B_{e_y-2}(v,u)$ and $F(u_i) <_L F(s)$ for all $i \le n$.
  We claim that $\neg u_i \E y$, for each $i \le n $, so that $ty \Q u_1y \Q \dots \Q u_ny \Q
xy$ witnesses $ty \Qcon xy$. Indeed, if $u_i \E y$, for some $i\le n$, we would have $f_L(y) <_L
f_R(u_i) <_L f_L(s)$ and we could apply \cref{Livelli} to obtain $y \in
B_{e_y-2}(v,u)$, which is impossible.
\end{proof}

\begin{theorem} \label{SufCondition}
Let $(V,E)$ be a connected interval graph. If $(V,E)$ does not contain a
buried subgraph, then $(W,Q)$ has two components.
\end{theorem}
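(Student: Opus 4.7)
The plan is to show that under the hypothesis, fixing any $vu \in W$ with $F(v) <_L F(u)$, every $cd \in W$ is $Q$-connected to either $vu$ or $uv$; combined with \cref{PropertyQ} this yields exactly two components. After possibly swapping $c$ and $d$, I may assume $F(c) <_L F(d)$ and the target becomes $vu \Qcon cd$.

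The first move is \cref{SemiBuried}: since $(V,E)$ has no buried subgraph, $R(B(v,u)) = \emptyset$, so $V = B(v,u) \sqcup K(B(v,u))$. This dichotomy classifies the pair $cd$. The mixed case, with one of $c, d$ in $B(v,u)$ and the other in $K(B(v,u))$, is impossible: a $K$-vertex is adjacent to every $B$-vertex, contradicting $\neg c \E d$. If both $c, d$ lie in $K(B(v,u))$, then $v \E c$ and $u \E d$ give $vu \Q cd$ outright.

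The substantive case is $c, d \in B(v,u)$. \cref{lemmaQcammini} yields $vu \Qcon cd$ directly when $f_R(c) \le_L f_R(v)$ and $f_L(u) \le_L f_L(d)$. For general positions I plan to pivot through intermediate $Q$-edges: for instance, if $v \E c$ then $vu \Q cu$, and the problem reduces to connecting $cu$ with $cd$; repeating the $B \sqcup K$ partition with base pair $(c, u)$ places $d$ into $B(c, u)$ (since $d \in K(B(c,u))$ would give $d \E c$, violating $cd \in W$) and makes the first positional hypothesis of \cref{lemmaQcammini} trivial in the new frame. A symmetric pivot works on the $u$-side, and the remaining configurations are covered by invoking \cref{lemmaQcammini} with base pair $(c, d)$ after checking $v, u \in B(c,d)$: if both lay in $K(B(c,d))$, then $F(v)$ and $F(u)$ would each contain $[f_R(c), f_L(d)]$, forcing $v \E u$ against $vu \in W$.

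The main obstacle is organising this case analysis so that every configuration of $c, d$ relative to $v, u$ is covered without gap. The key structural input making each switch of base pair legitimate is that the no-buried-subgraph hypothesis forces $V = B \sqcup K$ in every instance of \cref{ConstrB}, and that the base pair of any application itself cannot collapse into the $K$-part of its own construction, so that \cref{lemmaQcammini} can always be invoked after finitely many pivots.
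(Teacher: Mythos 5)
Your skeleton is the paper's own: orient the pairs by a fixed representation, use \cref{SemiBuried} to get $V=B\cup K(B)$ for every instance of \cref{ConstrB}, rule out $K$-membership of the relevant vertices by adjacency arguments, and conclude with \cref{lemmaQcammini}. The problem is that the step you explicitly defer --- organising the positional case analysis --- is the actual content of \cref{SufCondition}, and the moves you propose do not cover it. The two direct applications of \cref{lemmaQcammini} (base $(v,u)$ targeting $(c,d)$, or base $(c,d)$ targeting $(v,u)$) require both positional inequalities to point the same way. In the mixed configurations, say $f_R(v)<_L f_R(c)$ and $f_L(u)\le_L f_L(d)$, your only proposed move is the pivot through $cu$, whose entry edge $vu \Q cu$ needs $v\E c$, plus (tacitly) $\neg c\E u$ so that $cu\in W$ at all. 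Both can fail under the hypotheses of the theorem: take $F(v)=[0,1]$, $F(c)=[2,3]$, $F(u)=[5,6]$, $F(d)=[7,8]$, completed by three connecting intervals to a seven-vertex path (a connected interval graph with no buried subgraph); then $\neg v\E c$ and $\neg u\E d$, so neither your pivot nor its $u$-side mirror has its entry edge, while both direct applications miss their positional hypotheses. Replacing $F(c)$ by $[0.5,5.5]$ (and completing to a five-vertex path) kills the pivot the other way: now $v\E c$ but $c\E u$, so $cu\notin W$.

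What the paper does differently at exactly this point: the pivot pair is never reached by a single $Q$-edge. Treating the two pairs symmetrically (so that one may assume $f_R(c)\le_L f_R(a)$, which halves the analysis), it splits on the position of $f_L(d)$ relative to $f_R(a)$ and $f_L(b)$ and, in the two non-direct cases, pivots through $ad$ or $cb$ --- pairs whose membership in $W$ is forced by the case condition itself --- connecting the pivot to each of the original pairs by a separate application of \cref{lemmaQcammini} (with the single base $B(a,d)$ in one case, with the two bases $B(a,b)$ and $B(c,d)$ in the other). Your argument can be repaired along these lines: in the first configuration above, the base pair $(c,u)$ with targets $(v,u)$ and $(c,d)$ does the job. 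But as written, the entry-edge mechanism ($v\E c$ or $u\E d$) is the wrong device and leaves realisable configurations unreachable, so the proposal has a genuine gap precisely at the step you identify as the main obstacle.
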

\begin{proof}
Fix a representation $(L,<_L,f_L,f_R)$ of $(V,E)$ and assume that $(V,E)$
does not contain a buried subgraph. We show that if $ab,cd \in W$ are such
that $F(a) <_L F(b)$ and $F(c) <_L F(d)$, then $ ab \Qcon cd$. We can
assume without loss of generality that $f_R(c) \le_L f_R(a)$. We consider
three cases:
\begin{enumerate}[label=\textsf{Case \arabic*:}]
\item $f_L(b) <_L f_L(d)$: $B(a,b)$ (which satisfies the hypotheses of
    $(\maltese)$) is not a buried subgraph and hence by \cref{SemiBuried}
    we must have $B(a,b) = V$. In particular $c, d \in B(a,b)$ and we are
    in the hypotheses of \cref{lemmaQcammini}: we conclude that $ab \Qcon
    cd$.
\item $f_R(a) <_L f_L(d)\le_L f_L(b)$: $B(a,d)$ (which satisfies the
    hypotheses of $(\maltese)$) is not a buried subgraph and hence by
    \cref{SemiBuried} $b,c \in B(a,d)$. \cref{lemmaQcammini} implies both
    $ad \Qcon ab$ and $ad \Qcon cd$. It follows that $ab \Qcon cd$.
\item $f_L(d) \le_L f_R(a)$: neither $B(a,b)$ nor $B(c,d)$ (which both
    satisfy the hypotheses of $(\maltese)$) is a buried subgraph. By
    \cref{SemiBuried} we have $c \in B(a,b)$, which implies $ab \Qcon cb$,
    and $b \in B(c,d)$, which together with $f_L(d) <_L f_L(b)$ yields $cd
    \Qcon cb$ (we use \cref{lemmaQcammini} in both cases). Thus $ab \Qcon
    cd$ also in this case.\qedhere
\end{enumerate}
\end{proof}

\section{Reverse mathematics and interval graphs}\label{Sec:ReverseMath}

Reverse mathematics is a research program, which dates back to the Seventies,
whose goal is to find the exact axiomatic strength of theorems from different
areas of mathematics. It deals with statements about countable, or countably
representable, structures, using the framework of the formal system of second
order arithmetic $\Z_2$. We do not introduce reverse mathematics here, but
refer the reader to monographs such as \cite{sosoa} and \cite{Hirschfeldt15}.

The subsystems of second order arithmetic are obtained by limiting the
comprehension and induction axioms of $\Z_2$ to specific classes of formulas.
We mention only the subsystems we are going to use in this paper: $\rca$ is
the weak base theory corresponding to computable mathematics, $\wkl$ extends
$\rca$ by adding Weak K\"{o}nig's Lemma (each infinite binary tree has an
infinite path), and $\aca$ is even stronger allowing for definitions of sets
by arithmetical comprehension. It is well-known that $\wkl$ is equivalent to
many compactness principles and thus we can claim that a theorem not provable
in $\wkl$ does not admit a proof by compactness. In particular this applies
to \cref{MainTheorem}, as \cref{MainTheoremRM} shows that it is not provable
in $\wkl$.

%
%
%

The second author studied the equivalence of different characterizations of
interval orders from the reverse mathematics perspective in
\cite{marcone2007}. A similar study can be carried out for interval graphs,
and we summarize here the main results: full details and proofs are included
in the first author's PhD thesis \cite{MFCthesis}, which includes also
results about the subclass of indifference graphs (corresponding to proper
interval orders studied in \cite{marcone2007}).

As customary in reverse mathematics, the system in parenthesis indicates
where the definition is given or the statement proved. Notice also that in
this and in the next section we deal with countable graphs and orders, the
only ones second order arithmetic and its subsystems can speak of.

In the literature it is possible to find slightly different definitions of
interval graphs and orders, which depend on the notion of interval employed.
For example intervals may be required to be closed or not. We thus have five
conceptually distinct definitions of interval graphs:

\begin{definition}[$\rca$]\label{defintgraph}
Let $(V,E)$ be a graph.
\begin{itemize}
  \item $(V,E)$ is an \emph{interval graph} if there exist a linear order
      $(L,<_L)$ and a relation $F\subseteq V \times L$ such that,
      abbreviating $\{x\in L \mid (p,x)\in F\}$ by $F(p)$, for all $p,q\in
      V$ the following hold:
\begin{itemize}
	\item[(i1)] $F(p)\ne \emptyset$ and $\forall x,y \in F(p)\, \forall z\in L\, (x<_L z<_L y \rightarrow z\in F(p))$,
	\item[(i2)] $p \E q \Leftrightarrow  F(p) \cap F(q)\ne\emptyset$.
\end{itemize}
  \item $(V,E)$ is a \emph{1-1 interval graph} if it also satisfies
\begin{itemize}
	\item[(i3)] $F(p)\ne F(q)$ whenever $p\ne q$.
\end{itemize}
  \item $(V,E)$ is a \emph{closed interval graph} if there exist a linear
      order $(L,<_L)$ and two functions $f_L,f_R\colon V\to L$ such that
      for all $p,q\in V$
\begin{itemize}	
    \item[(c1)] $f_L(p)<_L f_R(p)$,
    \item[(c2)] $p \E q \Leftrightarrow  f_L(p)\le_L f_R(q) \le_L f_R(p)
        \lor f_L(q)\le_L f_R(p) \le_L f_R(q) $
\end{itemize}
  \item A closed interval graph $(V,E)$ is a \emph{1-1 closed interval
      graph} if we also have
\begin{itemize}	
    \item[(c3)] $f_R(p) \ne f_R(q) \lor f_L(p) \ne f_L(q)$ whenever $p\ne
        q$.
\end{itemize}
  \item $(V,E)$ is a \emph{distinguishing interval graph} if (c1) and (c2)
      hold together with
\begin{itemize}	
    \item[(c4)] $f_i(p) \ne f_j(q)$ whenever $p\ne q \lor i\ne j$.
\end{itemize}
\end{itemize}
\end{definition}

\subsection{Definitions and characterizations of interval graph}
In \cref{defClosedIntgraph} we mentioned that every interval graph is a
closed interval graph: in fact all the notions introduced in
\cref{defintgraph} are equivalent in a sufficiently strong theory. Our first
results concern the systems where the implications between the notions
introduced in \cref{defintgraph} can be proved. The same investigation for
interval orders was carried out in \cite{marcone2007} and in this respect
interval graphs and interval orders behave similarly. Indeed the proofs of
the results we are going to state either mimic the corresponding proofs for
interval orders or are easily derived from those results.

\cref{defintgraph} enumerates increasingly strong conditions, so that the
implications from a later to an earlier notion are easily proved in $\rca$.
Regarding the other implications we obtain that, as is the case for interval
orders, there are three distinct notions of interval graphs in $\rca$, namely
that of interval, 1-1 interval and closed interval graph.

\begin{theorem}[$\rca$]\label{6}
Every closed interval graph is a distinguishing interval graph.
\end{theorem}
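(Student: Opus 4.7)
The plan is to explicitly modify the given representation by ``spreading out'' coinciding endpoints, using the indexing of $V \subseteq \mathbb{N}$ to assign distinct tags. Given a closed interval graph $(V,E)$ with representation $(L,<_L,f_L,f_R)$, I would define a new linear order $L' = L \times \{0,1\} \times \mathbb{N}$ with the lexicographic order, and set
\[
f_L'(p) = (f_L(p), 0, p), \qquad f_R'(p) = (f_R(p), 1, p)
\]
for every $p \in V$. Since $L'$ is a countable linear order (coded straightforwardly from $L$ and $\mathbb{N}$) and $f_L', f_R'$ are defined by bounded formulas in the given data, the construction goes through in $\rca$.

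The verification of (c1), (c2) and (c4) for $(L',<_{L'},f_L',f_R')$ is essentially routine. Condition (c1), $f_L'(p) <_{L'} f_R'(p)$, holds because the first coordinates already satisfy $f_L(p) <_L f_R(p)$ by (c1) for the original representation. Condition (c4) is immediate: the middle coordinate distinguishes left from right endpoints (0 vs.\ 1), and the third coordinate distinguishes endpoints belonging to different vertices.

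The only nontrivial check is that the new representation still represents $E$. For this I would verify that $f_L'(p) \le_{L'} f_R'(q)$ holds if and only if $f_L(p) \le_L f_R(q)$ holds. The strict cases are handled by the first coordinate; in the boundary case $f_L(p) = f_R(q)$ the middle coordinates force $(0,p) < (1,q)$ in $\{0,1\} \times \mathbb{N}$ regardless of $p$ and $q$, so $f_L'(p) <_{L'} f_R'(q)$. Symmetrically one checks $f_L'(q) \le_{L'} f_R'(p) \iff f_L(q) \le_L f_R(p)$. Combining the two equivalences with the standard characterization of interval intersection then recovers the adjacency condition in (c2).

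I do not anticipate a real obstacle; this is the familiar ``double-and-tag'' construction, which mirrors the analogous result for interval orders in \cite{marcone2007}. The only subtlety to highlight is the role of the middle coordinate: without it, a left endpoint and a coincident right endpoint in $L$ might be swapped in $L'$ depending on the indices, which would change adjacency. Placing all left endpoints below all right endpoints at each fixed level of $L$ ensures that two intervals that barely touch in the original representation still touch in the new one, and vice versa.
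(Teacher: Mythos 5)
Your construction is correct and is essentially the intended argument: the paper only states this theorem and defers the proof to the first author's thesis, noting that it mimics the corresponding result for interval orders in \cite{marcone2007}, and that argument is exactly this double-and-tag refinement of $L$ carried out in $\rca$. You also handle the two genuinely relevant points correctly, namely breaking ties by placing left endpoints below right endpoints at each level of $L$, and verifying (c2) via the intersection form $f_L(p)\le_L f_R(q)\land f_L(q)\le_L f_R(p)$, which is equivalent to the stated disjunction because (c1) holds in both the old and the new representation.
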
 	

\begin{theorem}[$\rca$] \label{StrengthDefInter}
The following are equivalent:
\begin{enumerate}
	\item $\wkl$;
	\item every interval graph is a 1-1 interval graph;
	\item every 1-1 interval graph is a closed interval graph;
	\item every interval graph is a closed interval graph.
\end{enumerate}
\end{theorem}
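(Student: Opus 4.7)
The strategy is to close a cycle of implications. The directions $(4)\to(3)$ and $(4)\to(2)$ are immediate in $\rca$: the former holds because every 1-1 interval graph is in particular an interval graph; for the latter, \cref{6} yields that every closed interval graph is distinguishing, and inspection of the definitions shows that every distinguishing interval graph is 1-1 (condition (c4) is strictly stronger than (c3)). What remains to prove is the forward implication $(1)\to(4)$ together with the reversals $(2)\to(1)$ and $(3)\to(1)$.

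For $(1)\to(4)$, given an interval graph $(V,E)$ with a representation $F\subseteq V\times L$ in the sense of \cref{defintgraph}, the plan is to produce a closed representation $(L',<_{L'},f_L,f_R)$, where $L'$ is allowed to extend $L$ by fresh endpoints. Any finite portion of the vertex set admits a finite closed representation that respects finitely much of $F$, and the possible finite completions can be organised as the nodes of a finitely branching infinite tree, which by a standard recoding may be taken to be a binary tree. Applying $\wkl$ in its compactness-of-$2^{\nat}$ form produces an infinite branch from which $f_L$, $f_R$ and $<_{L'}$ are read off uniformly. This mirrors the argument for interval orders given in \cite{marcone2007}, and carries over with only syntactic modifications.

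For the reversals, the natural tool is the equivalence over $\rca$ between $\wkl$ and $\Sigma^0_1$-separation. Given disjoint $\Sigma^0_1$ sets $A,B\subseteq\nat$, I would construct a computable interval graph $(V,E)$ equipped with a (non-closed, non-$1$-$1$) representation $F$ definable in $\rca$, containing for each $n\in\nat$ a gadget whose intervals coincide, or have unresolved ambiguous endpoints, until $n$ is enumerated into $A$ or into $B$. The graph is designed so that in any 1-1 representation (respectively, any closed representation) the extra rigidity forces, for each $n$, a coherent commitment that decides membership in $A$ versus $B$, and collecting these commitments produces a separator.

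The delicate step, and the main obstacle, is this reversal construction. The graph together with its witness to the weak ``interval graph'' property must be computable, so that the hypotheses of (2) and (3) are available in $\rca$, while at the same time the additional regularity required by the 1-1 or closed conditions must encode a noncomputable choice. Arranging these gadgets so that the plain representation is available in $\rca$ yet no 1-1 or closed upgrade is, without accidentally forcing the plain representation itself to separate $A$ and $B$, is the combinatorial heart of the argument. The full verification, paralleling the analogous construction for interval orders in \cite{marcone2007}, is carried out in the first author's thesis \cite{MFCthesis}.
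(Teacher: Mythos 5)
The paper never writes out a proof of this theorem: it states that the proofs of the results in \cref{Sec:ReverseMath} either mimic the corresponding proofs for interval orders in \cite{marcone2007} or are easily derived from them, with full details in \cite{MFCthesis}. Your architecture matches that intended route: the implications from (4) to (2) and (3) are indeed routine in $\rca$ (your use of \cref{6} for (4)$\to$(2) is fine), the direction (1)$\to$(4) via a computably bounded tree of finite closed representations of initial segments is the standard $\wkl$ argument (one orders the formal endpoints $l_v,r_v$ consistently with $E$; bounded K\"onig's lemma gives the branch), and the reversals are naturally run through the equivalence of $\wkl$ with $\Sigma^0_1$-separation, as in \cite{marcone2007}.

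The genuine gap is that the reversals (2)$\to$(1) and (3)$\to$(1), which are the substance of the theorem, are only announced, not constructed: no gadget is defined and nothing is verified, and you yourself label this the combinatorial heart of the argument. Moreover, the plan as written cannot serve for (3): to reverse (3) you must hand $\rca$ a graph together with a representation that already satisfies (i3), i.e.\ a $1$-$1$ (but non-closed) representation, whereas your single gadget is described as carrying a \lq\lq non-closed, non-$1$-$1$\rq\rq\ representation, to which statement (3) simply does not apply. The two reversals need different constructions: for (2) one supplies coinciding intervals for each $n$ and arranges, by adding vertices when $n$ is enumerated into $A$ or $B$, that any $1$-$1$ representation must decide which of the two copies protrudes, thereby separating $A$ from $B$; for (3) the given representation must consist of pairwise distinct intervals whose only ambiguity is the relative position of endpoints, which a closed representation is forced to resolve in a way that codes the separation. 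Until these gadgets are actually built and verified (as is done in \cite{MFCthesis}, following \cite{marcone2007}), what you have is a correct plan rather than a proof.
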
 	

\subsection{Structural characterizations of interval graphs}

Since interval graphs are incomparability graphs (and \cref{DefComparability}
can be given in $\rca$) we first look at the most important structural
characterization of comparability graphs. The first result is due to Jeff
Hirst (\cite[Theorem 3.20]{HirstPhD}).

\begin{lemma}[$\rca$]\label{compgraph}
The following are equivalent:
\begin{enumerate}
\item $\wkl$;
\item every irreflexive graph such that every cycle of odd length has a triangular
    chord is a comparability graph.
\end{enumerate}
\end{lemma}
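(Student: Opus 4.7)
The plan is to prove $(1) \Imp (2)$ by a $\wkl$-compactness argument that formalises Gallai's classical characterization of comparability graphs, and to prove $(2) \Imp (1)$ by a computable reversal that codes an infinite binary tree into a graph whose transitive orientations must select an infinite path.

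For the forward direction, I work in $\wkl$ and fix an irreflexive graph $(V,E)$ in which every odd cycle has a triangular chord. Enumerate $E$ as $(e_n)_{n \in \nat}$ and let $\mathcal{T} \subseteq 2^{<\nat}$ be the tree of strings $\sigma$ of length $n$ coding orientations of $e_0, \dots, e_{n-1}$ which are transitive on the finite subgraph spanned by those edges and closed under the Gallai implication rule: whenever $\sigma$ orients $\{u,v\}$ as $u \to v$ and $\{v,w\} \in E$ with $\{u,w\} \notin E$ is among the first $n$ edges, then $\sigma$ must orient $\{v,w\}$ as $v \to w$. The hypothesis that every odd cycle has a triangular chord is precisely the finite Gallai--Ghouila-Houri condition, so the finite version of the theorem, which is provable in $\rca$ by induction on the number of vertices, guarantees a consistent orientation exists at every level, making $\mathcal{T}$ infinite. $\wkl$ then delivers an infinite branch, which encodes a transitive orientation of $(V,E)$, exhibiting it as a comparability graph.

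For the reversal, I argue over $\rca$. Given an infinite subtree $T \subseteq 2^{<\nat}$, I construct a computable irreflexive graph $G_T$ enjoying (a) every odd cycle has a triangular chord, so that (2) applies, and (b) any transitive orientation of $G_T$ effectively computes an infinite path through $T$. The skeleton of $G_T$ is the tree $T$ itself, to which are attached small choice gadgets at each internal node $\sigma \in T$. Each gadget is designed so that, in a transitive orientation, exactly one of the available extensions $\sigma \smf 0, \sigma \smf 1 \in T$ is selected, and so that any odd cycle created by the gluing contains a triangular chord --- for example by gluing gadgets along triangles, so that every cross-gadget odd cycle runs through a triangle. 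Applying (2) to $G_T$ yields a transitive orientation, from which a path through $T$ is read off uniformly; this derives $\wkl$.

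The main obstacle is designing the gadgets for the reversal: one must ensure simultaneously that every odd cycle acquires a triangular chord (otherwise the hypothesis of (2) fails for uninteresting reasons) and that transitive orientations are forced to make a coherent sequence of binary choices along a branch of $T$. I expect the gadgets will be built from small cliques (providing the required triangular chords) connected along short paths that force the orientation to propagate the chosen extension of $\sigma$ down to its selected child; the bulk of the technical work is verifying that this construction is uniform and effective in $T$, and that no odd cycle of $G_T$ escapes being chorded.
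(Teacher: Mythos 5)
Your forward direction is the right idea (compactness over the finite Gallai/Ghouila-Houri theorem) and is essentially how this result is proved; note, for the record, that the paper does not prove this lemma at all but quotes it from Hirst's thesis \cite{HirstPhD}. Still, as written it needs repairs: your forcing rule points the wrong way (from $u\to v$ with $\{v,w\}\in E$ and $\{u,w\}\notin E$, transitivity forces $w\to v$, not $v\to w$), and the node condition \lq\lq transitive on the subgraph spanned by $e_0,\dots,e_{n-1}$\rq\rq\ must be weakened to \lq\lq no visible violation\rq\rq\ (no directed $u\to v\to w$ with $uw\notin E$, nor with $uw$ among the enumerated edges but oriented $w\to u$); with the literal reading, restrictions of genuine transitive orientations of $(V,E)$ need not lie on your tree, so its infiniteness is no longer justified. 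Also, the finite theorem is only asserted (\lq\lq provable in $\rca$ by induction on the number of vertices\rq\rq); it is a finitary combinatorial statement and so formalizable, but its classical proof is not a routine vertex induction and you would owe at least a reference or sketch.

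The genuine gap is the reversal, whose entire mathematical content --- the gadgets --- is left unspecified (\lq\lq I expect the gadgets will be built from small cliques\dots\rq\rq). Worse, the strategy itself is problematic: you want local choice gadgets at each node of an arbitrary infinite tree $T$ so that any transitive orientation selects a child at each node and these selections trace an infinite path. But $T$ may have dead ends, the graph must be built computably from $T$ before item (2) is applied once, and at no finite stage can the construction know which child of a node heads an infinite subtree; so nothing prevents the selected branch from entering a dead subtree and terminating. Finite-stage forcing can only react to $\Sigma^0_1$ events (a subtree being seen to die), and if both children of a node die there is no coherent way to force \lq\lq neither\rq\rq. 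This is exactly why reversals of this kind go instead through the $\Sigma^0_1$-separation formulation of $\wkl$: given injections $f,g$ with disjoint ranges, one builds a graph with no odd cycles at all (so the triangular-chord hypothesis holds vacuously), consisting of a reference edge and, for each $n$, an edge whose orientation relative to the reference is forced---by attaching connecting paths of suitable parity at the stage $n$ appears in $\ran(f)$ or in $\ran(g)$---so that any transitive orientation reads off a separating set; disjointness of the ranges guarantees the forcing requirements never conflict. As it stands, your reversal is a plan rather than a proof, and the plan needs to be rerouted through separation (or an equivalent device) to work.
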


We then consider two structural characterizations of interval graphs (notice
that \cref{def:triangulated} can be given in $\rca$). The necessity of both
conditions is provable in $\rca$, but the sufficiency of one of them requires
$\wkl$.

\begin{theorem}[$\rca$]	\label{1}
Every interval graph is an incomparability graph such that every simple cycle
of length four has a chord.	Moreover, every interval graph is triangulated
and has no asteroidal triples.

Every incomparability graph such that every simple cycle of length four has a
chord is an interval graph.
\end{theorem}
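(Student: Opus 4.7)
The plan is to prove the theorem in two directions within $\rca$. For the forward direction, fix an interval graph $(V,E)$ with representation $(L,<_L,f_L,f_R)$. Define $v \prec u \iff f_R(v)<_L f_L(u)$; this is $\Delta^0_0$ in the given data, and a routine check shows that $\prec$ is a strict partial order whose incomparability relation coincides with $E$, so $(V,E)$ is the incomparability graph of $\prec$.

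Both the 4-cycle-chord property and triangulation follow from a single argument. Given a chordless simple cycle $v_0 \E v_1 \E \dots \E v_{n-1} \E v_0$ of length $n \ge 4$, rotate so that $v_0$ minimises $f_R$ among the $v_i$. For $2 \le i \le n-2$, the non-adjacency $\neg v_0 \E v_i$ combined with minimality of $f_R(v_0)$ forces $F(v_0) <_L F(v_i)$. Meanwhile $v_0 \E v_1$ and $v_0 \E v_{n-1}$ yield $f_L(v_1), f_L(v_{n-1}) \le_L f_R(v_0) \le_L f_R(v_1), f_R(v_{n-1})$, so $f_R(v_0) \in F(v_1)\cap F(v_{n-1})$. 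This gives $v_1 \E v_{n-1}$, a chord whenever $n\ge 4$. For the non-existence of asteroidal triples, if $\{a,b,c\}$ is pairwise non-adjacent then its elements are pairwise $\prec$-comparable, so WLOG $F(a)<_L F(b)<_L F(c)$. Applying \cref{SubPath} with $w=b$ to any path from $a$ to $c$ then produces a vertex of the path adjacent to $b$, contradicting the requirement that the path avoid the neighbors of $b$.

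For the converse, suppose $(V,E)$ is the incomparability graph of a partial order $\prec$ and that every simple 4-cycle has a chord. I would first show that $\prec$ is $2+2$-free: a $2+2$ pattern $a\prec b$, $c\prec d$ on four distinct elements with no other comparabilities would yield the simple 4-cycle $a \E c \E b \E d \E a$ in $E$, whose only possible chords are $a\E b$ and $c\E d$, both ruled out by $a\prec b$ and $c\prec d$. The main obstacle is the second step: constructing an interval representation of $\prec$ within $\rca$. I plan to enumerate $V=\{v_n\}_{n\in\nat}$ and inductively build a finite linear order $L_n$ together with partial maps $f_L,f_R$ on $\{v_0,\dots,v_{n-1}\}$, at stage $n$ inserting new endpoints $f_L(v_n),f_R(v_n)$ into $L_{n-1}$ so as to respect every intersection/non-intersection constraint with previously placed intervals. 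The $2+2$-free hypothesis is crucial here, ensuring that the previously placed vertices split into three blocks (predecessors of $v_n$, incomparables with $v_n$, and successors of $v_n$) occupying disjoint intervals in the current endpoint order, so that well-defined gaps are available for inserting the new endpoints. This construction is the interval-graph analogue of the one carried out for interval orders in \cite{marcone2007}.
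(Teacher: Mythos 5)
Your converse direction contains a genuine gap, and it is located exactly where the reverse-mathematical content of the theorem lies. The reduction to $2\oplus 2$-freeness is fine, but the greedy construction you then sketch builds a \emph{closed} representation (endpoint maps $f_L,f_R$ into a linear order grown by stages), and this cannot work. First, even setting $\rca$ aside, the one-pass insertion can dead-end: take $u\prec x$ with $z$ incomparable to both, enumerated in the order $u,z,x$. When you place $z$ nothing forces $f_R(u)<_L f_R(z)$, and if you happen to choose $f_R(z)\le_L f_R(u)$ (or make them equal), then $x$ cannot be inserted, since it needs $f_R(u)<_L f_L(x)\le_L f_R(z)$. So your claim that $2\oplus2$-freeness makes the previously placed predecessors/incomparables/successors of $v_n$ occupy disjoint blocks of the current endpoint order is false; what $2\oplus2$-freeness gives is that at most one of the two conflicting future demands ($f_R(u)<_L f_R(z)$ versus $f_R(z)<_L f_R(u)$) can ever arise, but a stage-by-stage construction cannot know which, and committed endpoints cannot be revised. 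Second, and more decisively, no repair of this strategy can succeed in $\rca$: if the 4-cycle hypothesis yielded a closed representation in $\rca$, then composing with the (first) $\rca$-provable half of the theorem would prove \lq\lq every interval graph is a closed interval graph\rq\rq\ in $\rca$, which by \cref{StrengthDefInter} is equivalent to $\wkl$. The statement only asks for an interval graph in the weak sense of \cref{defintgraph}, and the intended route is the one your first step already points to: the 4-cycle property rules out $2\oplus2$ in the associated order, the structural characterization of interval orders ($2\oplus2$-free if and only if interval order) is provable in $\rca$ by \cite[Theorem 2.1]{marcone2007} (producing only a relational representation $F\subseteq V\times L$), and then the incomparability graph of an interval order is an interval graph by \cref{graph-order}.

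A smaller instance of the same oversight affects your forward direction: you argue from a representation $(L,<_L,f_L,f_R)$, i.e.\ you assume the graph is a \emph{closed} interval graph, which in $\rca$ is strictly stronger than being an interval graph (again \cref{StrengthDefInter}); as written you prove the statement only for closed interval graphs. This part is fixable: define $v\prec u$ by \lq\lq $F(v)\cap F(u)=\emptyset$ and some (equivalently every) point of $F(v)$ precedes some (every) point of $F(u)$\rq\rq, which is $\Delta^0_1$ in the data, and replace the endpoint-minimality argument by one using only convexity, e.g.\ for a chordless simple cycle $v_0\E\dots\E v_{n-1}\E v_0$ with $n\ge 4$ show inductively $F(v_0)<_L F(v_i)$ for $2\le i\le n-2$ and then exhibit a point common to $F(v_1)$ and $F(v_{n-1})$ by picking witnesses of the intersections with $F(v_0)$, $F(v_2)$ and $F(v_{n-2})$; the asteroidal-triple argument via \cref{SubPath} likewise goes through once that proposition is read for relational representations.
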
 	

\begin{theorem}[$\rca$]\label{wkltri}
The following are equivalent:
\begin{enumerate}
	\item $\wkl$;
	\item if a reflexive graph is triangulated and has no asteroidal triples, then it is an interval graph.
\end{enumerate}
\end{theorem}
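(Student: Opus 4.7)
The plan is to prove the two implications separately. The forward direction $(1)\Rightarrow(2)$ should reduce to the classical Lekkerkerker--Boland--Gilmore--Hoffman picture, combined with the ingredients already isolated in the section, whereas the reverse direction $(2)\Rightarrow(1)$ will require coding a $\Pi^0_1$ problem into a triangulated asteroidal-triple-free graph.

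For $(1)\Rightarrow(2)$, let $(V,E)$ be a reflexive, triangulated graph without asteroidal triples. The strategy is to show that the irreflexive complement $(V,\overline{E})$ has the property that every odd cycle (of length at least $5$) admits a triangular chord. Granted this, \cref{compgraph} applied under $\wkl$ delivers a partial order whose comparability graph is $(V,\overline{E})$, so $(V,E)$ is an incomparability graph. Since triangulation in particular puts a chord in every simple $4$-cycle, the sufficiency half of \cref{1}, available already in $\rca$, then yields an interval representation. The combinatorial heart of this step is the finitary fact that under triangulation and the absence of asteroidal triples every odd cycle in the complement has a triangular chord. This is a bounded case analysis on the cycle vertices — exhibit a putative asteroidal triple among every three pairwise non-consecutive members of such a cycle and use triangulation plus the no-asteroidal-triples condition to produce the chord — and it is a universal statement over finite tuples, hence provable in $\rca$.

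For $(2)\Rightarrow(1)$, I would derive $\wkl$ by reducing \cref{compgraph}(2) to (2). Given an irreflexive graph $(V,E)$ in which every odd cycle has a triangular chord, the idea is to construct, effectively in $\rca$, an auxiliary reflexive graph $(V^{\ast},E^{\ast})$ that is provably triangulated and free of asteroidal triples, and from which an interval representation can be pulled back to a partial order on $V$ whose comparability graph is $(V,E)$. A natural candidate for $(V^{\ast},E^{\ast})$ starts from the reflexive closure of $\overline{E}$ and attaches small auxiliary gadgets that fill in missing chords of long cycles and block all potential asteroidal triples, without disturbing the incomparability information on $V$. Applying (2) to $(V^{\ast},E^{\ast})$ produces an interval representation, hence an associated partial order (via \cref{1}), whose restriction to $V$ is the desired witness that $(V,E)$ is a comparability graph.

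The hard step is by far the reverse implication: the gadget must be designed so that, inside $\rca$, one can verify triangulation and the absence of asteroidal triples for $(V^{\ast},E^{\ast})$, while simultaneously ensuring that any interval representation encodes a partial order on $V$ and not merely on $V^{\ast}$. If a sufficiently local gadget cannot be produced, the fallback plan is a direct tree coding: from an infinite binary tree $T \subseteq 2^{<\nat}$ with no computable infinite path, construct a triangulated reflexive graph $G_T$ without asteroidal triples whose interval representation necessarily linearly arranges a distinguished skeleton in a way that reads off an infinite branch of $T$, contradicting the assumption on $T$. In either approach, the delicate interaction between triangulation and the absence of asteroidal triples — both of which are needed at once, but tend to push in opposite directions — is what I expect to absorb most of the work.
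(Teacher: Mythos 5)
Your forward direction follows a reasonable and essentially standard route: granted the finite combinatorial lemma that in a triangulated graph without asteroidal triples every odd cycle of the complement has a triangular chord, \cref{compgraph} (under $\wkl$) makes the irreflexive complement a comparability graph, and the last part of \cref{1} (already in $\rca$, using triangulation to chord the simple $4$-cycles) upgrades the resulting incomparability graph to an interval graph. Two caveats, though. That lemma is the entire content of this direction (it is the Gilmore--Hoffman step), and you only gesture at it; moreover \lq\lq it is a universal statement over finite tuples, hence provable in $\rca$\rq\rq\ is not a valid inference, since $\rca$ does not prove arbitrary true $\Pi^0_1$ statements. What actually saves this step is that the classical finite argument (from an odd cycle of the complement with no triangular chord, produce a chordless simple cycle of length at least four or an asteroidal triple) formalizes with $\Sigma^0_1$ induction; but that argument must be carried out, not merely invoked.

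The genuine gap is the reversal $(2)\Rightarrow(1)$, for which you offer two strategies and no construction. The first strategy cannot work as described: if an interval representation of the auxiliary graph $(V^{\ast},E^{\ast})$ is to restrict to a transitive orientation of the given graph $(V,E)$, then the reflexive complement of $(V,E)$ must sit inside $(V^{\ast},E^{\ast})$ as an induced subgraph, because comparability of $u,v\in V$ in the pulled-back order is precisely non-adjacency in $E^{\ast}$. But triangulation and absence of asteroidal triples are inherited by induced subgraphs, so this would force the complement of $(V,E)$ to be (classically) an interval graph, which already fails for $(V,E)=2K_2$, a comparability graph whose complement is the chordless $C_4$; and \lq\lq filling in missing chords\rq\rq\ necessarily adds edges between vertices of $V$ (new vertices never chord an old cycle), destroying exactly the incomparability information you promised to preserve. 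The fallback is not in the right shape for a reversal over $\rca$ either: positing a tree with no computable infinite path and deriving a contradiction only speaks to $\omega$-models. To obtain $\wkl$ you must, within $\rca$, derive from statement (2) a principle such as $\Sigma^0_1$-separation --- this is how the reversals of \cref{compgraph} and of the analogous interval-order representation theorems in \cite{marcone2007} proceed, and it is the route the paper indicates for this theorem --- i.e.\ build from two injections with disjoint ranges a concrete triangulated graph without asteroidal triples, verify these properties in $\rca$, and show that any interval representation computes a separating set. None of this is present, so the reversal direction is missing.
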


\cref{IntG:figRiassuntiva} summarizes the results about the different
definitions and characterizations of interval graphs. The arrows correspond
to provability in $\rca$, while every implication from a notion below another
is equivalent to $\wkl$.

\begin{figure}
\begin{center}
\begin{tikzpicture}[scale=.3]
\node (dist) at (13,15) {distinguishing interval};
\node (11cl) at (-1,15) {1-1 closed interval};
\node (closed) at (-13,15) {closed interval};
\node (11int) at (-13,10) {1-1 interval};
\node (4) at (2,5) {four cycle + incomparability};
\node (int) at (-13,5) {interval graph};
\node (tri) at (-13,0) {triangulated + no asteroidal triples};

\draw[<->,thick] (closed) -- (11cl);
\draw[<->,thick] (dist) -- (11cl);
\draw[<->,thick] (4) -- (int);
\draw[->,thick] (closed) -- (11int);
\draw[->,thick]  (11int) -- (int);
\draw[->,thick]  (int) -- (tri);
\end{tikzpicture}
\caption{Implications in $\rca$}\label{IntG:figRiassuntiva}
\end{center}
\end{figure}
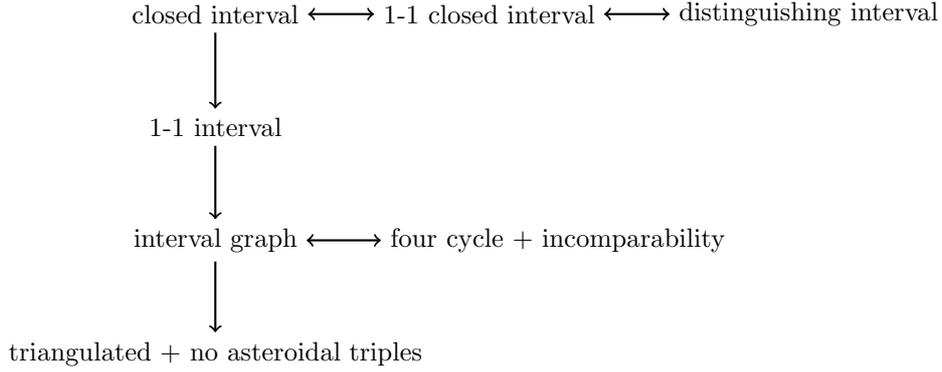

Schmerl \cite{schmerl2005} claimed that the statement \lq\lq A graph is an
interval graph if and only if each finite subgraph is representable by
intervals\rq\rq\ is equivalent to $\wkl$. \cref{wkltri} confirms his claim
and shows that compactness is necessary to prove the statement. On the other
hand, the corresponding statement for interval orders, i.e.\ an order is an
interval order if and only if each suborders is an interval order, is
provable in $\rca$ because the structural characterization of interval orders
(as the partial orders not containing $2 \oplus 2$) is provable in $\rca$
\cite[Theorem 2.1]{marcone2007}. The different strengths of the structural
characterizations of interval graphs and orders can be traced to the fact
that an interval order carries full information about the relative position
of the intervals in its representations, while an interval graph does not.

Lekkerkerker and Boland \cite{lekkeikerker} provide another characterization
of interval graphs listing all the forbidden subgraphs. It is routine to
check in $\rca$ that those graphs are a complete list of graphs whose cycles
of length greater than four do not have chords or which contain an asteroidal
triple.

\subsection{Interval graphs and interval orders}

Different definitions for interval orders, mirroring those of
\cref{defintgraph}, were given and studied in \cite{marcone2007}. We give
here only the most basic one, as the others can be easily guessed from this.

\begin{definition}[$\rca$] \label{defintorder1}
A partial order $(V,\preceq)$ is an \emph{interval order} if there exist a
linear order $(L,<_L)$ and a relation $F\subseteq V \times L$ such that,
abbreviating $\{x\in L \mid (p,x)\in F\}$ by $F(p)$, for all $p,q\in V$ the
following hold:
\begin{itemize}
	\item[(i1)] $F(p)\ne \emptyset$ and $\forall x,y \in F(p)\, \forall z\in L \,(x<_L z<_L y \rightarrow z\in F(p))$,
	\item[(i2)] $p \preceq q \Leftrightarrow \forall x\in F(p)\,\forall y\in F(q)\,(x<_L y)$.
\end{itemize}
\end{definition}

We explore the strength of the statements that allow moving from interval
graphs to interval orders and back. By the previous results (and the
corresponding ones in \cite{marcone2007}) it suffices to consider three
different notions on each side, and we concentrate on the relationship
between corresponding notions. In one direction everything goes through in
$\rca$.

\begin{theorem}[$\rca$]\label{graph-order} 
Let $(V,E)$ be a graph and let $\mathcal{P}$ be any of \lq\lq interval\rq\rq,
\lq\lq 1-1 interval\rq\rq, \lq\lq closed interval\rq\rq. $(V,E)$ is a
$\mathcal{P}$ graph if and only if there exists a $\mathcal{P}$ order
$(V,\prec)$ such that $p \E q \Leftrightarrow p\nprec q \land q\nprec p$ for all $p,q\in V$.
\end{theorem}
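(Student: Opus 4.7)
The plan is to verify that essentially the same representation witnesses both that $(V,E)$ is a $\mathcal{P}$ interval graph and that an associated partial order $(V,\prec)$ is a $\mathcal{P}$ interval order. In both directions the relevant new relation, namely $\prec$ in the forward direction and $E$ in the backward one, turns out to be $\Delta^0_1$-definable from the given data, so the construction goes through in $\rca$.

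For the forward direction I start from a $\mathcal{P}$ interval graph $(V,E)$ with representation $F$. Using (i1) I define by bounded search a choice function $g\colon V\to L$ with $g(p)\in F(p)$, and then set $p \prec q$ iff $p\neq q$, $\neg p \E q$, and $g(p)<_L g(q)$. Since $F(p)$ and $F(q)$ are disjoint intervals whenever $\neg p \E q$, one of $F(p)<_L F(q)$ or $F(q)<_L F(p)$ must hold and $g$ records the correct direction; irreflexivity is built in, while asymmetry and transitivity follow from the interval property combined with transitivity of $<_L$. The same $F$ then witnesses that $(V,\prec)$ satisfies (i1) and the interval-order version of (i2), because $p \prec q$ is equivalent to $\forall x\in F(p)\,\forall y\in F(q)\,(x<_L y)$; by construction the incomparability relation of $\prec$ coincides with $E$.

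For the backward direction I start from a $\mathcal{P}$ interval order $(V,\prec)$ with representation $F$ and define $p \E q$ iff $\neg p\prec q\land\neg q\prec p$. A common point of $F(p)\cap F(q)$ immediately witnesses both negations; conversely, if $F(p)$ and $F(q)$ are disjoint, the interval property forces one of them to lie entirely $<_L$-below the other, so $p$ and $q$ are $\prec$-comparable. Thus $p \E q \iff F(p)\cap F(q)\neq\emptyset$, and the same $F$ satisfies (i2) for interval graphs; reflexivity of $E$ follows from $F(p)\neq\emptyset$.

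The three specializations of $\mathcal{P}$ are handled uniformly. Clause (i3) is syntactically identical on the graph and order sides. For the closed case, the same $f_L, f_R$ and condition (c1) transfer directly, and (c2) is equivalent, modulo (c1), to the closed intervals $[f_L(p),f_R(p)]$ and $[f_L(q),f_R(q)]$ having nonempty intersection, which is exactly the failure of $f_R(p)<_L f_L(q)\lor f_R(q)<_L f_L(p)$, i.e., of $p$ and $q$ being $\prec$-comparable in the closed interval order. The main step that needs care is checking that the defined relations are actual sets in $\rca$: for $\prec$ this rests on the existence of the choice function $g$ together with the decidability of $E$, and for $E$ it is immediate from the decidability of $\prec$.
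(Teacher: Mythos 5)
Your proof is correct and is essentially the intended argument: the paper does not spell out a proof of this theorem (it defers to \cite{MFCthesis}, noting that the proofs mimic the corresponding ones for interval orders in \cite{marcone2007}), and the standard argument is exactly yours --- reuse the same representation in both directions, the only $\rca$-specific point being that $\prec$ must be $\Delta^0_1$-definable from the graph representation, which your least-witness choice function $g$ handles (note this is an unbounded, not bounded, search, terminating by (i1); equivalently one can use the $\Sigma^0_1$/$\Pi^0_1$ double definition of ``$F(p)$ lies entirely $<_L$-below $F(q)$''). No gaps.
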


The other direction is more interesting, as only in one case $\rca$ suffices.
The proofs of the reversals to $\wkl$ are modifications of the proof of
\cite[Theorem 6.4]{marcone2007}.

\begin{theorem}[$\rca$] \label{order-graph}
Let $(V,\preceq)$ be a partial order. $(V,\preceq)$ is an interval order if
and only if $(V,E)$, where $p \E q \Leftrightarrow p\nprec q \land q\nprec p$
for all $p,q\in V$, is an interval graph.
\end{theorem}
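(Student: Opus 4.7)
The plan is to prove both directions inside $\rca$, using only the structural characterization of interval orders as the partial orders not containing $2\oplus 2$ (\cite[Theorem 2.1]{marcone2007}, provable in $\rca$) together with the four-cycle chord property of interval graphs recorded in \cref{1}.

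For the forward direction, suppose $(V,\preceq)$ is an interval order with representation $F\subseteq V\times L$. I would argue that the very same $F$ is an interval-graph representation of $(V,E)$. Condition (i1) transfers verbatim, and for (i2) only the implication $p\E q\Rightarrow F(p)\cap F(q)\ne\emptyset$ requires work. From $p\nprec q$ extract $x\in F(p)$ and $y\in F(q)$ with $y\le_L x$, and from $q\nprec p$ extract $x'\in F(p)$ and $y'\in F(q)$ with $x'\le_L y'$; these searches are $\Sigma^0_1$ and hence legitimate in $\rca$. A short case split on whether $x\le_L y'$ or $y'<_L x$ then produces, using convexity of $F(p)$ and $F(q)$, either $x\in F(q)$ (witnessed by $y\le_L x\le_L y'$) or $y'\in F(p)$ (witnessed by $x'\le_L y'<_L x$).

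For the reverse direction, suppose $(V,E)$ is an interval graph. Arguing by contradiction, assume $(V,\preceq)$ is not an interval order: by the $2\oplus 2$ characterization there exist distinct $a,b,c,d$ with $a\prec b$, $c\prec d$, and every other pair $\preceq$-incomparable. The definition of $E$ then makes $a\E c\E b\E d\E a$ a simple four-cycle, and the only candidates for a chord, namely $(a,b)$ and $(c,d)$, are both ruled out by $a\prec b$ and $c\prec d$. This contradicts \cref{1}, so $(V,\preceq)$ is $2\oplus 2$-free and hence an interval order.

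The main subtlety I anticipate is resisting the temptation to invoke \cref{graph-order} to conjure an interval order with the same incomparability graph as $(V,E)$: that would merely hand us \emph{some} interval order, not the specific $\preceq$ we started with. Routing through the $2\oplus 2$ forbidden pattern is what keeps the argument anchored to the given order $\preceq$ while staying entirely within $\rca$, since every cited fact (the forbidden-pattern characterization of interval orders and the four-cycle chord property) is provable there.
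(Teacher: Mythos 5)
Your proof is correct and follows essentially the route the paper indicates (it defers the details to \cite{MFCthesis}, noting the results are ``easily derived'' from the interval-order results of \cite{marcone2007}): the forward direction reuses the interval-order representation verbatim, and the reverse direction combines the $2\oplus 2$ characterization of interval orders, provable in $\rca$ by \cite[Theorem 2.1]{marcone2007}, with the chordless four-cycle property of interval graphs from \cref{1}. Your closing remark correctly identifies the key subtlety, namely that \cref{graph-order} only yields \emph{some} interval order associated to $(V,E)$, not the given $\preceq$, which is exactly why the argument must go through the forbidden $2\oplus 2$ pattern.
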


\begin{theorem}[$\rca$] \label{order2}
The following are equivalent:
\begin{enumerate}
\item $\wkl$
\item Let $(V,\preceq)$ be a partial order. $(V,\preceq)$ is a 1-1 interval
    order if and only if $(V,E)$, where $p \E q \Leftrightarrow p\nprec q \land q\nprec p$
    for all $p,q\in V$, is a 1-1 interval graph.
\item Let $(V,\preceq)$ be a partial order. $(V,\preceq)$ is a closed
    interval order if and only if $(V,E)$, where $p \E q \Leftrightarrow
    p\nprec q \land q\nprec p$ for all $p,q\in V$, is a closed interval graph.
\end{enumerate}
\end{theorem}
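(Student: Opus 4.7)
My plan is to handle the forward direction (from $\wkl$) directly using the upgrading results already summarised in this section, and to establish the two reversals by adapting the tree-coding argument of \cite[Theorem 6.4]{marcone2007}. I treat (2); (3) is identical with the closed variants substituted throughout. The implication \lq\lq$(V,\preceq)$ a 1-1 interval order $\Rightarrow$ $(V,E)$ a 1-1 interval graph\rq\rq\ is provable already in $\rca$: a 1-1 interval order representation $F$ of $(V,\preceq)$ is automatically a 1-1 interval graph representation of $(V,E)$, since (i1) and (i3) are inherited and, for (i2), $p \E q$ iff $p\nprec q \land q\nprec p$ iff $\neg(F(p)<_L F(q)) \land \neg(F(q)<_L F(p))$ iff $F(p)\cap F(q)\ne\emptyset$. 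For the reverse implication, from $(V,E)$ a 1-1 interval graph we derive via \cref{order-graph} that $(V,\preceq)$ is an interval order, and then we invoke the analogue of \cref{StrengthDefInter} for interval orders (proved in $\wkl$ in \cite{marcone2007}) to upgrade $(V,\preceq)$ to a 1-1 interval order.

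To derive $\wkl$ from (2) or (3), we follow the template of \cite[Theorem 6.4]{marcone2007}, which proves the corresponding strength of the statement \lq\lq every interval order is a 1-1 (resp.\ closed) interval order\rq\rq. The idea is to code an arbitrary infinite $0$-$1$ tree $T$ into a partial order $(V,\preceq)$, defined uniformly in $T$ in $\rca$, with two properties: (a) the incomparability graph $(V,E)$ admits a 1-1 (respectively closed) interval graph representation computable from $T$ alone, obtained by spreading interval endpoints across the tree levels; (b) any 1-1 (respectively closed) interval order representation of $(V,\preceq)$ encodes an infinite path through $T$. Applying (2) (respectively (3)) to this $(V,\preceq)$, property (a) yields the existence of a 1-1 (respectively closed) interval order representation, which by (b) produces an infinite path through $T$, giving $\wkl$.

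The technical heart of the proof is designing the coding so that $(V,E)$ can be represented in the stronger sense purely computably, while any matching representation of $(V,\preceq)$ must align with the branching of $T$. This exploits an asymmetry already present in \cite[Theorem 6.4]{marcone2007}: an interval graph representation is blind to the direction of $\prec$ along each $\overline{E}$-component (any local reversal of $<_L$ yields another valid graph representation), whereas an interval order representation pins down this direction globally. Recovering a 1-1 (or closed) representation of $(V,\preceq)$ therefore inherits the bookkeeping needed to pick out a path through $T$, while the analogous bookkeeping on the graph side can be absorbed into the tree-indexed endpoint choices. Making this asymmetry quantitatively tight, so that the coding is sharp enough to reverse all the way to $\wkl$ and not merely to some weaker principle, is the main obstacle.
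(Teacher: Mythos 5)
Your forward direction is correct and is essentially the intended one: the passage from a 1-1 (resp.\ closed) interval \emph{order} representation to a 1-1 (resp.\ closed) interval \emph{graph} representation of the incomparability graph is indeed an $\rca$-verification that the same $F$ (resp.\ the same $f_L,f_R$) works, and the converse under $\wkl$ follows by chaining \cref{order-graph} with the $\wkl$-provable upgrades of interval orders from \cite{marcone2007}. This matches the paper, which states the theorem as part of a survey and defers details to \cite{MFCthesis}, saying only that the reversals are modifications of \cite[Theorem 6.4]{marcone2007}.

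The reversal, however, is where all the mathematical content of the theorem lives, and in your proposal it is a design brief rather than a proof. You correctly identify that the strength must sit in the direction \lq\lq 1-1 (closed) interval graph $\Rightarrow$ 1-1 (closed) interval order\rq\rq\ and that one should build, computably from a tree (or from a pair of inseparable $\Sigma^0_1$ sets), a partial order satisfying your properties (a) and (b). But no partial order is actually defined, and neither (a) nor (b) is verified for any concrete construction; you yourself flag the quantitative tightening of the coding as \lq\lq the main obstacle\rq\rq. Property (a) is exactly the point where the modification of \cite[Theorem 6.4]{marcone2007} is needed and where a naive transplant can fail: one must check that the incomparability graph of the hard order still admits a \emph{computable} 1-1 (resp.\ closed) interval graph representation, i.e.\ that the $\Sigma^0_1$-separation problem encoded in the order representations genuinely evaporates on the graph side. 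Your stated reason for this --- that \lq\lq any local reversal of $<_L$ yields another valid graph representation\rq\rq\ --- is not literally true (reversing $<_L$ on a sub-block breaks intervals straddling it); the actual slack is that a graph representation need only reproduce the adjacency pattern and is free to permute the relative placement of non-intersecting intervals regardless of the direction of $\prec$. Until a concrete order is exhibited and (a), (b) are checked against it, the reversal remains unproved.
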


\section{Why compactness does not suffice} \label{Sec:MainTheoremRM}

It is immediate (using \cref{1}) that \cref{tuniqord,QuniqordRCA} are
provable in $\rca$. On the other hand, we now show that \cref{SufCondition}
is much stronger, and indeed equivalent to $\aca$. As mentioned in the
introduction of the paper, this result explains why the attempts to prove it
by compactness cannot succeed.

\begin{lemma}[$\aca$]\label{ACAproves}
Let $(V,E)$ be a connected reflexive graph which is triangulated and with no asteroidal
triples. Suppose furthermore that $a,b,c,d \in V$ are such that $\neg ab
\Qcon cd$ and $\neg ab \Qcon dc$. Then there exists a buried subgraph $B
\subseteq V$ such that either $a,b\in B$ or $c,d \in B$, and no subgraph $A
\subseteq B$, which contains either $a,b$ or $c,d$ respectively, is a buried
subgraph.
\end{lemma}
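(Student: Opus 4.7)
The plan is to apply \cref{wkltri} (which is provable in $\aca$) to conclude that $(V,E)$ is an interval graph, fix a representation $(L,<_L,f_L,f_R)$, and work with the construction $B(v,u)$ of \cref{ConstrB}. After reversing the ordering within $\{a,b\}$ or within $\{c,d\}$ (the latter swap exchanging the hypotheses $\neg ab\Qcon cd$ and $\neg ab\Qcon dc$) and possibly swapping the pairs $(a,b)$ and $(c,d)$, I may assume $F(a)<_L F(b)$, $F(c)<_L F(d)$ and $f_R(c)\le_L f_R(a)$. Since the levels $B_n$ are defined by arithmetical recursion from the data, the sets $B(a,b)$ and $B(c,d)$ exist in $\aca$.

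The heart of the proof is the claim that at least one of $B(a,b)$, $B(c,d)$ is already a buried subgraph (each trivially contains the corresponding pair). I would argue by contradiction: if neither is buried, then \cref{SemiBuried} gives $R(B(a,b))=R(B(c,d))=\emptyset$. Since $f_R(c)\le_L f_R(a)<_L f_L(b)$ forces $\neg c \E b$, we get $c \notin K(B(a,b))$ and therefore $c \in B(a,b)$; symmetrically $b \in B(c,d)$.

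I would then split on the position of $f_L(d)$ versus $f_L(b)$. If $f_L(b) \le_L f_L(d)$, then $f_R(a)<_L f_L(b)\le_L f_L(d)$ gives $\neg d \E a$ and hence $d \in B(a,b)$, so \cref{lemmaQcammini} with $v=a$, $u=b$, $x=c$, $y=d$ yields $ab \Qcon cd$ at once. Otherwise $f_L(d) <_L f_L(b)$, and I route through the intermediate pair $cb$, which lies in $W$ because $\neg c \E b$: \cref{lemmaQcammini} inside $B(a,b)$ with $x=c$, $y=b$ gives $ab \Qcon cb$, while inside $B(c,d)$ with $x=c$, $y=b$ (the hypothesis $f_L(d)\le_L f_L(b)$ now holds) gives $cd \Qcon cb$, so concatenation yields $ab \Qcon cd$. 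Either way we contradict $\neg ab \Qcon cd$, establishing that $B(a,b)$ or $B(c,d)$ is buried.

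For the minimality clause, if the buried subgraph produced is $B = B(a,b)$ (the case $B=B(c,d)$ is symmetric) and $A \subseteq B$ is any buried subgraph containing $\{a,b\}$, I would show $B_n(a,b) \subseteq A$ by induction on $n$: the base case $n=0$ is by assumption, and the inductive step invokes the closure property of buried subgraphs noted after \cref{defbburied} (if $w \E z$ and $\neg w \E z'$ for some $z,z' \in A$, then $w \in A$); hence $A = B$. I expect the main obstacle to be the second sub-case of the $Q$-path argument: the proof of \cref{SufCondition} handles the analogous positional case via $B(a,d)$, but here we have no control over whether $B(a,d)$ is buried, and rerouting through the pair $cb$ instead requires careful verification that the positional hypotheses of \cref{lemmaQcammini} are met inside both $B(a,b)$ and $B(c,d)$.
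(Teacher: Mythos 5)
The combinatorial core of your argument is sound, but the step you dispatch in one clause --- ``since the levels $B_n$ are defined by arithmetical recursion from the data, the sets $B(a,b)$ and $B(c,d)$ exist in $\aca$'' --- is precisely where the difficulty of this lemma lies, and as stated it is not a valid inference. Iterating arithmetical comprehension along $\nat$ and collecting the stages into a single object is what $\aca^+$ provides, not $\aca$; and you need not only $B(a,b)$ as a set but the whole stratification, i.e.\ the levels $B_n(a,b)$ and the rank function $e_w$, because \cref{lemmaQcammini} (and \cref{Livelli}, \cref{CorollarioLivelli}, \cref{AltroCammino}) argue by induction on these ranks. The paper's proof is devoted almost entirely to this point: it notes that \cref{ConstrB} taken at face value seems to need $\aca^+$, and then observes that membership of $w$ in $B(v,u)$, and in each $B_n(v,u)$, is equivalent to the existence of a finite labelled binary tree recording the derivation of $w$ (each non-leaf has one child labelled by an $E$-neighbour and one by an $E$-non-neighbour of its label, leaves are labelled $v$ or $u$); since the inductive clause is positive and finitary this gives a $\Sigma^0_1$ characterization, so arithmetical comprehension yields the sets and the rank function, after which the rest of the argument runs in $\rca$. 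Your proposal is missing this idea (or any substitute for it). A smaller point: to fix a representation by functions $f_L,f_R$ you should invoke \cref{StrengthDefInter} together with \cref{wkltri}.

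On the combinatorial side your route is slightly different from the paper's and, in one respect, more careful. The paper simply asserts that the proof of \cref{SufCondition} yields a buried subgraph with the required properties, but Case 2 of that proof, read literally, produces $B(a,d)$, which is not obviously a set containing $\{a,b\}$ or $\{c,d\}$. Your merging of Cases 2 and 3 --- routing through the pair $cb$ whenever $f_L(d)\le_L f_L(b)$, applying \cref{lemmaQcammini} once inside $B(a,b)$ (with $x=c$, $y=b$, using $f_R(c)\le_L f_R(a)$ and $c\in B(a,b)$ via $\neg c\E b$ and \cref{SemiBuried}) and once inside $B(c,d)$ (with $x=c$, $y=b$, using $f_L(d)\le_L f_L(b)$ and $b\in B(c,d)$ via $\neg b\E c$) --- does meet the positional hypotheses, so the obstacle you anticipated at the end is not an obstacle; this version uses only $B(a,b)$ and $B(c,d)$ and therefore delivers exactly the stated conclusion. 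Your symmetry reductions are legitimate (note $xy\Qcon zw$ iff $yx\Qcon wz$, so the two hypotheses are preserved), and your minimality argument by induction showing $B_n(a,b)\subseteq A$, via the closure property recorded after \cref{defbburied}, correctly supplies a detail the paper leaves implicit. So: fix the $\aca$-definability issue as above and the proof is complete.
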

\begin{proof}
By \cref{StrengthDefInter,wkltri} $\wkl$, and a fortiori $\aca$, suffices to
prove that any connected graph which is triangulated and with no asteroidal
triples has a closed interval representation. We then need to check that the
proof of \cref{SufCondition}, which indeed provides a buried subgraph with
the desired properties, goes through in $\aca$.

The first step is checking that, given $v,u \in V$ with $\neg v \E u$, we can
carry out \cref{ConstrB} and define $B(v,u)$ and the various $B_n(v,u)$'s in
$\aca$. In fact the definition of each $B_n(v,u)$ in \cref{ConstrB} uses an
instance of arithmetical comprehension and thus the whole construction, as
presented there, appears to require the system known as $\aca^+$, which is
properly stronger than $\aca$.

This problem can however be overcome in the following way. Given $v,u \in V$
as before, we can characterize $B(v,u)$ as the set of $w \in V$ such that
there exists a finite tree $T \subseteq 2^{<\nat}$ and a label function $\ell
\colon T \to V$ with the following properties:
\begin{itemize}
\item $\ell(\emptyset)=w$ (here $\emptyset$ is the root of $T$);
\item if $\sigma \in T$ is not a leaf of $T$, then $\sigma \smf 0,
    \sigma\smf 1 \in T$, $\ell(\sigma) \E \ell(\sigma \smf 0)$ and $\neg
    \ell(\sigma) \E \ell(\sigma \smf 1)$;
\item if $\sigma \in T$ is a leaf of $T$, then $\ell(\sigma) \in \{v,u\}$.
\end{itemize}
In fact, the tree and its label function describe the \lq steps\rq\ allowing
$w$ to enter $B(v,u)$. Moreover $B_n(v,u)$ is the set of $w \in V$ such that
there exists $T \subseteq 2^{<n}$ and $\ell$ witnessing $w \in B(v,u)$. These
characterizations of $B(v,u)$ and $B_n(v,u)$ use $\Sigma^0_1$-formulas, and
show that $\aca$ suffices to prove the existence of the sets.

Once $B(v,u)$ and each $B_n(v,u)$ are defined, it is straightforward to check
that all subsequent steps in the proof of \cref{SufCondition} can be carried
out in $\rca$.
\end{proof}

To prove that \cref{SufCondition} implies $\aca$ we use the following
notions. Given an injective function $f \colon \nat \to \nat$ we say that $i$
is true for $f$ when $f(k)>f(i)$ for all $k>i$. It is easy to see that there
exist infinitely many $i$ which are true for $f$. If $i$ is not true for
$f$, i.e.\ if $f(k)<f(i)$ for some $k>i$, we say that $i$ is false for $f$.
Moreover, we say that $i$ is true for $f$ at stage $s$ if $f(k)>f(i)$ whenever
$i<k<s$, and that $i$ is false for $f$ at stage $s$ if $f(k)<f(i)$ for some
$k$ with $i<k<s$. If the injective function $f$ is fixed, we omit \lq\lq for
$f$\rq\rq\ from this terminology.

The following Proposition is well-known (see e.g.\ the discussion after
Definition 4.1 in \cite{Noeth}).

\begin{proposition}[$\rca$]\label{true}
The following are equivalent:
\begin{enumerate}
\item $\aca$;
\item if $f \colon \nat \to \nat$ is an injective function there exists an
    infinite set $T$ such that every $i \in T$ is true for $f$.
\end{enumerate}
\end{proposition}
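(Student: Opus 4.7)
The plan is to prove the equivalence by showing each direction through a standard trick exploiting the fact that $\aca$ is equivalent over $\rca$ to the existence of the range of every injective function $\nat\to\nat$.

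For the direction from $\aca$ to (2), I would first note that $\aca$ allows us to form the set
\[
T=\{i\in\nat \mid \forall k>i\,(f(k)>f(i))\}
\]
since this is defined by an arithmetical formula. The remaining task is to show $T$ is infinite. I would argue: given any $N\in\nat$, the set $\{f(i) \mid i\geq N\}$ is infinite (as $f$ is injective), hence has a minimum. If $i_0\geq N$ realizes this minimum, then $f(k)\neq f(i_0)$ for every $k>i_0$ (by injectivity), and $f(k)\geq f(i_0)$ by minimality, so $f(k)>f(i_0)$. Thus $i_0\in T$ with $i_0\geq N$, showing $T$ is unbounded.

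For the direction from (2) to $\aca$, I would prove that the range of an arbitrary injective function $g\colon\nat\to\nat$ exists as a set, which is known to be equivalent to $\aca$ over $\rca$. Apply (2) to $g$ to obtain an infinite set $T$ of true indices, enumerated in increasing order as $t_0<t_1<t_2<\dots$. The key observation is that $g\restriction T$ is strictly increasing: if $t_j<t_k$ are both in $T$, then since $t_j$ is true and $t_k>t_j$, we have $g(t_k)>g(t_j)$. In particular $g(t_k)\geq k$ for all $k$, so $\{g(t_k)\}_k$ is cofinal in $\nat$. Then for any $n$, find the least $k$ with $g(t_k)>n$; since every $j>t_k$ satisfies $g(j)>g(t_k)>n$, we have $n\in\ran(g)$ if and only if $n\in\{g(0),\dots,g(t_k)\}$, a bounded (hence $\Delta^0_0$) condition. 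This gives a $\Delta^0_1$ definition of $\ran(g)$, so $\ran(g)$ exists in $\rca$ augmented with the set $T$, yielding $\aca$.

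I do not expect any serious obstacle: both directions are textbook in reverse mathematics and the paper itself cites \cite{Noeth} for the result. The only minor care needed is in the first direction, where the existence of the minimum of an infinite $\Sigma^0_1$-definable subset of $\nat$ must be justified inside $\aca$ (which is immediate, since $\aca$ proves $\Sigma^0_1$-induction, or equivalently, the least-number principle for arithmetical formulas).
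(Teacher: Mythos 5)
Your proof is correct, and it is the standard argument: the paper itself does not prove \cref{true} but merely cites it as well-known (pointing to the discussion after Definition 4.1 in \cite{Noeth}), and both of your directions --- arithmetical comprehension plus the least-value argument for infinitude of the true stages, and the reversal via a $\Delta^0_1$ definition of $\ran(g)$ from an infinite set of true indices, using that the existence of ranges of injective functions yields $\aca$ over $\rca$ --- are exactly the textbook route. The only point needing care, which you correctly flag, is justifying the least-number principle for the relevant formula; note that nonemptiness of $\{f(i)\mid i\ge N\}$ together with the $\Sigma^0_1$ least-number principle (already available in $\rca$) suffices, so no appeal to infinitude is actually needed there.
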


\begin{theorem}[$\rca$]
The following are equivalent:
\begin{enumerate}
\item $\aca$;
\item let $(V,E)$ be a connected graph, triangulated and with no asteroidal
    triples; if $a,b,c,d \in V$ are such that $\neg ab \Qcon cd$ and $\neg
    ab \Qcon dc$, then there exists a buried subgraph $B \subseteq V$ such
    that either $a,b\in B$ or $c,d \in B$, and no subgraph $A \subseteq B$,
    which contains either $a,b$ or $c,d$ respectively, is a buried
    subgraph;
\item let $(V,E)$ be a connected closed interval graph; if $(W,Q)$ has more
    than two components, then there exists a buried subgraph $B
    \subseteq V$;
\item let $(V,E)$ be a connected closed interval graph; if $(V,E)$ is not
    uniquely orderable, then there exists a buried subgraph $B \subseteq
    V$.
\end{enumerate}
\end{theorem}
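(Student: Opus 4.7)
The plan is to establish the cycle $(1)\Rightarrow(2)\Rightarrow(3)\Rightarrow(4)\Rightarrow(1)$; the first of these implications is exactly \cref{ACAproves}, so only the remaining three require new work.

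For $(2)\Rightarrow(3)$, by the $\rca$-provable part of \cref{1} every closed interval graph is triangulated and has no asteroidal triples, so the structural hypotheses of $(2)$ are automatic. The involution $(x,y)\mapsto(y,x)$ is an automorphism of $(W,Q)$ and, by \cref{PropertyQ}, moves each component to a different one (a fixed component would yield $ab\Qcon ba$, which contradicts any associated partial order). Hence the components of $(W,Q)$ pair up, so ``more than two components'' forces at least four, and choosing $ab$ and $cd$ in distinct pairs gives $\neg ab\Qcon cd$ and $\neg ab\Qcon dc$; $(2)$ then delivers a buried subgraph. For $(3)\Rightarrow(4)$, if $(V,E)$ is not uniquely orderable it cannot be complete (its only associated order would then be the antichain), so $W\neq\emptyset$. \cref{PropertyQ} gives at least two $Q$-components, and \cref{QuniqordRCA} rules out exactly two, so $(W,Q)$ has more than two components and $(3)$ applies.

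The reversal $(4)\Rightarrow(1)$ is the heart of the matter, and I would prove it via \cref{true}. Given an injective $f\colon\nat\to\nat$, my plan is to construct, uniformly in $f$ and inside $\rca$, a connected closed interval graph $(V_f,E_f)$ containing two distinguished vertices $a,b$ with $\neg a\E b$, a universal vertex $u$ ensuring connectedness, and one test vertex $c_i$ per index $i\in\nat$. The interval representation should be designed so that $(V_f,E_f)$ provably fails unique orderability via the swap of $a$ and $b$, while the position of $c_i$ in the representation depends on the stage at which $f$ refutes the trueness of $i$: so long as $i$ remains true at every inspected stage, $c_i$ is forced into any candidate buried subgraph that contains $a,b$; as soon as some $s>i$ with $f(s)<f(i)$ is discovered, the interval of $c_i$ is enlarged to overlap both $F(a)$ and $F(b)$, placing $c_i$ into $K(B)$ rather than $B$.

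The main obstacle is to meet these requirements simultaneously: providing a computable interval representation that witnesses $(V_f,E_f)$ as a connected closed interval graph in $\rca$, securing a provable failure of unique orderability, and ensuring that any buried subgraph $B$ of $(V_f,E_f)$ satisfies $c_i\in B\Rightarrow i$ is true for $f$ while containing infinitely many $c_i$. Once this is in place, $(4)$ produces such a $B$, and $\{i:c_i\in B\}$ is the infinite set of true indices demanded by \cref{true}, hence $\aca$ follows.
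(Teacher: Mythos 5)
Your overall architecture coincides with the paper's: $(1)\Rightarrow(2)$ is \cref{ACAproves}, the two middle implications are handled exactly as the paper intends (the pairing of $Q$-components under $(x,y)\mapsto(y,x)$ via \cref{PropertyQ}, plus \cref{QuniqordRCA}), and aiming the reversal at \cref{true} by building a connected closed interval graph whose unique buried subgraph codes the true indices of an injective $f$ is the right strategy. Those parts are fine. The problem is the construction you sketch for $(4)\Rightarrow(1)$, which as described cannot be carried out in $\rca$.

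You propose that, as soon as some $s>i$ with $f(s)<f(i)$ is discovered, "the interval of $c_i$ is enlarged to overlap both $F(a)$ and $F(b)$". That makes the adjacency $c_i \E a$ (and $c_i \E b$) depend on whether $i$ is false for $f$, which is a $\Sigma^0_1$ and in general undecidable question; so the edge relation of $(V_f,E_f)$ is only $\Sigma^0_1$-definable from $f$, and $\rca$ cannot prove that the graph (let alone a closed interval representation of it) exists. For the reversal you need $(V_f,E_f)$ and its representation to be $\Delta^0_1$ in $f$, with all adjacencies and interval endpoints of a vertex fixed irrevocably at the stage it is introduced; the limit information ``$i$ is true for $f$'' may only surface in the buried subgraph that statement (4) hands you, since that is precisely the extra strength being measured. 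The paper's construction does exactly this: it introduces a pair $x_s,y_s$ at stage $s+1$, puts $y_s \E x_i$ iff $i$ is true \emph{at stage} $s+1$ (a bounded condition), never touches earlier intervals, and then proves that any buried subgraph $B$ must satisfy $x_n\in B$ iff $n$ is false, so $\{n \mid x_n\notin B\}$ is the desired set. Your one-vertex-per-index inventory ($a$, $b$, a universal vertex, and the $c_i$) also leaves no way to satisfy condition (ii) of \cref{defbburied}: a universal vertex always lies in $K(B)$, and nothing in your graph can play the role of the paper's vertex $r$ (adjacent only to the universal vertex), which is what makes $R(B)\neq\emptyset$ and hence makes a buried subgraph possible at all. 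Together with the unproved claims that the graph is not uniquely orderable and that every buried subgraph has the required shape, this is the actual mathematical content of the reversal, and it is exactly what your proposal defers as the ``main obstacle''; so the proof is not complete as it stands.
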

\begin{proof}
$(1 \Imp 2)$ is \cref{ACAproves}. The implication $(2 \Imp 3)$ is trivial,
while $(3 \Imp 4)$ follows directly from \cref{QuniqordRCA}, which goes
through in $\rca$.

To prove $(4 \Imp 1)$ we fix an injective function $f \colon \nat \to \nat$
and we define (within $\rca$) a connected closed interval graph $(V,E)$ such
that $(W,Q)$ has more than two components. We then prove, arguing
in $\rca$, that the unique buried subgraph $B \subseteq V$ codes the
(necessarily infinite) set of numbers which are true for $f$.

We let $V = \{a,b,k,r\} \cup \{x_i, y_i \mid i \in \nat\}$. Beside making sure that $(V,E)$ is reflexive, the definition of the edge
relation is by stages: at stage $s$ we define $E$ on $V_s = \{a,b,k,r\} \cup
\{x_i, y_i \mid i < s\} \subseteq V$. At stage $0$ let $k$ be adjacent to
$a$, $b$ and $r$ (and add no other edges). At stage $s+1$ we define the vertices adjacent to
$x_s$ and $y_s$ by the following clauses:
\begin{enumerate}[label=(\alph*)]
  \item $a \E x_s \E b \E y_s$ and $x_s \E k \E y_s$,
  \item $x_s \E x_i$ and $y_s \E y_i$ for each $i < s$,
  \item $x_s \E y_i$ for each $i \le s$,
  \item for $i \le s$, $y_s \E x_i$ if and only if $i$ is true for $f$ at
      stage $s+1$.
\end{enumerate}

It is immediate that $(V,E)$ is connected. To check that it is a closed
interval graph we define a closed interval representation $f_L,f_R: V \to L$
where $(L,<_L)$ is a dense linear order. The definition of $f_L$ and $f_R$
reflects the construction of the graph by stages. At stage $0$ assign to the
members of $V_0$ elements of $L$ satisfying
\[
f_L(r) <_L f_L(k) <_L f_R(r) <_L f_L(a) <_L f_R(a) <_L f_L(b) <_L f_R(b) <_L f_R(k).
\]
This ensures that we are representing the restriction of the graph to $V_0$.

At stage $s+1$, first let $f_L(x_s)= f_L(a)$ and $f_R(y_s)=f_R(b)$ (since
this is done at every stage, we are respecting conditions (a) and (b)). We
thus still need to define $f_R(x_s)$ and $f_L(y_s)$; first of all we make
sure that $f_L(b) <_L f_L(y_i) <_L f_L(y_s) <_L f_R(x_s) <_L f_R(b)$ for
every $i<s$, so that (c) is also respected. To respect condition (d) as well
we satisfy the following requirements:
\begin{itemize}
  \item if $i<s$ is true at stage $s+1$, then $f_R(x_s) <_L f_R(x_i)$ (which
      implies $f_L(y_s) <_L f_R(x_i)$);
  \item if $j<s$ is false at stage $s+1$, then $f_R(x_j) <_L f_L(y_s)$.
\end{itemize}
The existence of $f_L(y_s) <_L f_R(x_s)$ with these properties follows from
the density of $L$ and from the fact that if $i<s$ is true at stage $s+1$ and
$j<s$ is false at stage $s+1$, then $f_R(x_j) <_L f_R(x_i)$. To see this
notice that:
\begin{itemize}
  \item if $i<j$, then $i$ was also true at stage $j+1$ and we set $f_R(x_j)
      <_L f_R(x_i)$ then;
  \item if $j<i$, then $j$ was already false at stage $i+1$ (if $j$ was true
      at stage $i+1$, then $f(j)<f(i)$, and $i$ would be false at stage
      $s+1$ because $j$ is false at that stage), and hence we set $f_R(x_j)
      <_L f_L(y_i) <_L f_R(x_i)$ at that stage.
\end{itemize}
\cref{repr} depicts a sample interval representation following this
construction.

\begin{figure}
\begin{center}
\begin{tikzpicture}[scale=.6]
\draw[|-|] (0,8.5) --  node[above=0mm] {\small $k$} (13,8.5);
\draw[|-|] (1,7.5) -- node[above=0mm] {\small $a$} (2.5,7.5);
\draw[|-|] (3,7.5) -- node[above=2mm,right=16mm] {\small $b$} (12,7.5);
\draw[|-|] (-0.5,7.5) -- node[above=0mm] {\small $r$} (0.5,7.5);
\draw[|-|] (4,6.5) -- node[above=2mm,right=12mm] {\small $y_0$} (12,6.5);
\draw[|-|] (1,6) -- node[above=2mm,left=14mm] {\small $x_0$} (9,6);
\draw[|-|] (4.5,5.5) -- node[above=2mm,right=11mm] {\small $y_1$} (12,5.5);
\draw[|-|] (1,5) -- node[above=2mm,left=8mm] {\small $x_1$} (7,5);
\draw[|-|] (5,4.5) -- node[above=2mm,right=10mm] {\small $y_2$} (12,4.5);
\draw[|-|] (1,4) -- node[above=2mm,left=5mm] {\small $x_2$} (6,4);
\draw[|-|] (7.5,3.5) -- node[above=2mm,right=2mm] {\small $y_3$} (12,3.5);
\draw[|-|] (1,3) -- node[above=2mm,left=11mm] {\small $x_3$} (8,3);
\end{tikzpicture}
\caption{\label{repr} Interval representation of $V_4$ in case $1$ (and so $2$) becomes false at stage $3$.}
\end{center}
\end{figure}
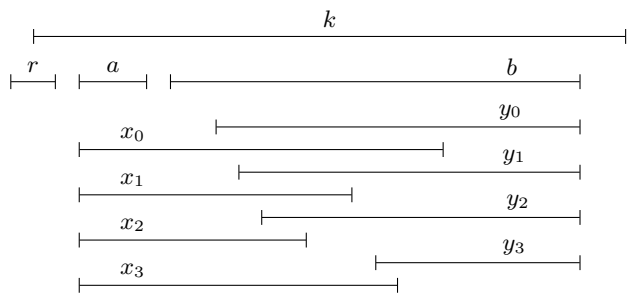

To check that $(V,E)$ is not uniquely orderable let $\prec_1$ be the partial
order induced by the interval representation we just described: $v \prec_1 u$
if and only if $f_R(v) <_L f_L(u)$. Define $\prec_2$ so that $\prec_1$ and
$\prec_2$ coincide on $V \setminus \{s\}$ and $u \prec_2 s$ for all $u \in V
\setminus \{r,k\}$. It is immediate that both $\prec_1$ and $\prec_2$ are
associated to $(V,E)$, and that $\prec_2$ is not dual of $\prec_1$.


By (4) there exists a buried subgraph $B \subseteq V$. First of all notice
that $k \in K(B)$ and hence $k \notin B$. Now observe that $r \in B$ implies,
using Conditions (i) and (iii) of \cref{defbburied}, that either some $x_n$
or some $y_n$ belongs to $B$. From there, using Condition (iii) again, it is
easy to see that $B=V \setminus \{k\}$ and hence $R(B)= \emptyset$,
contradicting Condition (ii). Thus $r \notin B$. Then, in order to satisfy
Condition (i), we must have either $a,b \in B$ or $a,y_n \in B$ or $x_m,y_n
\in B$, for some $n$ and some $m$ which is false at stage $n+1$. In any case
we have $a \in B$: in the first two cases this is obvious, and in the latter
case this follows from Condition (iii) because $a \E x_m$ and $\neg a \E y_n$
for every $n$ and $m$. But then, using $b \E y_n$ and $\neg b \E a$ we obtain
$b \in B$ even in the second and third case. Thus we can conclude that $a,b
\in B$. For each $n$ we have $y_n \E b$ and $\neg y_n \E a$ and therefore
$y_n \in B$. Since $b \E x_n \E a$ for each $n \in \nat$, then either $x_n
\in K(B)$ or $x_n \in B$ depending whether $x_n$ is adjacent to every $y_m$
or not, namely whether $n$ is true or false for $f$. Therefore we showed
\[
B = \{a,b\} \cup \{y_n \mid n \in \nat\} \cup \{x_n \mid n \text{ is false}\},
\]
so that $K(B)= \{k\} \cup \{x_n \mid n \text{ is true}\}$ and $R(B)=\{r\}$.
Then $T=\{n \mid x_n \notin B\}$ is the (necessarily infinite) set of all $n$
which are true for $f$.
\end{proof}


\newcommand{\etalchar}[1]{$^{#1}$}

\end{document}